\tikzstyle{mutable}=[inner sep=0.5mm,circle,draw,minimum size=2mm]
\tikzstyle{frozen}=[inner sep=.9mm,rectangle,draw]
\tikzstyle{dot} = [fill=black!25,inner sep=0.5mm,circle,draw,minimum size=1mm]
\tikzstyle{blue dot} = [draw=blue,fill=blue!25,inner sep=0.5mm,circle,draw,minimum size=1mm]
\tikzstyle{marked}=[inner sep=0.5mm,circle,draw,blue!75!black,fill=blue!50]
\tikzstyle{outline}=[thick,line width=1.5mm,draw=black!10]
\tikzstyle{oriented}=[draw=black,thick,decoration={markings,mark=at position 0.52 with {\arrow{>}}},postaction={decorate}]
\tikzstyle{faded oriented}=[draw=black!25,thick,decoration={markings,mark=at position 0.52 with {\arrow{>}}},postaction={decorate}]\tikzstyle{invisible}=[inner sep=-.3, minimum size=-.3]
\newcommand{\PostnikovBlack}[5][]{%A macro to make curved paths around a black vertex of valence 3, which does not touch the boundary
    \pgfmathanglebetweenpoints{\pgfpointanchor{#2}{center}}
                              {\pgfpointanchor{#3}{center}}
    \global\let\firstangle\pgfmathresult % angle along first line segment
    \pgfmathanglebetweenpoints{\pgfpointanchor{#2}{center}}
                              {\pgfpointanchor{#4}{center}}
    \global\let\secondangle\pgfmathresult; % angle along second segment
    \pgfmathanglebetweenpoints{\pgfpointanchor{#2}{center}}
                              {\pgfpointanchor{#5}{center}}
    \global\let\thirdangle\pgfmathresult; % angle along third segment
  		\path (#2) to coordinate[midway](m1) (#3);
		\path (#2) to coordinate[midway](m2) (#4);
		\path (#2) to coordinate[midway](m3) (#5);
		\draw[oriented,#1] (m1) to [out=\firstangle-135,in=\secondangle+135] (m2);
		\draw[oriented,#1] (m2) to [out=\secondangle-135,in=\thirdangle+135] (m3);
		\draw[oriented,#1] (m3) to [out=\thirdangle-135,in=\firstangle+135] (m1);
}
\newcommand{\PostnikovWhite}[5][]{%A macro to make curved paths around a white vertex of valence 3, which does not touch the boundary
    \pgfmathanglebetweenpoints{\pgfpointanchor{#2}{center}}
                              {\pgfpointanchor{#3}{center}}
    \global\let\firstangle\pgfmathresult % angle along first line segment
    \pgfmathanglebetweenpoints{\pgfpointanchor{#2}{center}}
                              {\pgfpointanchor{#5}{center}}
    \global\let\secondangle\pgfmathresult; % angle along second segment
    \pgfmathanglebetweenpoints{\pgfpointanchor{#2}{center}}
                              {\pgfpointanchor{#4}{center}}
    \global\let\thirdangle\pgfmathresult; % angle along third segment
  		\path (#2) to coordinate[midway](m1) (#3);
		\path (#2) to coordinate[midway](m2) (#5);
		\path (#2) to coordinate[midway](m3) (#4);
		\draw[oriented,#1] (m1) to [out=\firstangle+135,in=\secondangle-135] (m2);
		\draw[oriented,#1] (m2) to [out=\secondangle+135,in=\thirdangle-135] (m3);
		\draw[oriented,#1] (m3) to [out=\thirdangle+135,in=\firstangle-135] (m1);
}
\newcommand{\PostnikovWhiteBoundaryThree}[5][]{%A macro to make curved paths around a white vertex of valence 3, which touches the boundary
    \pgfmathanglebetweenpoints{\pgfpointanchor{#2}{center}}
                              {\pgfpointanchor{#3}{center}}
    \global\let\firstangle\pgfmathresult % angle along first line segment
    \pgfmathanglebetweenpoints{\pgfpointanchor{#2}{center}}
                              {\pgfpointanchor{#5}{center}}
    \global\let\secondangle\pgfmathresult; % angle along second segment
    \pgfmathanglebetweenpoints{\pgfpointanchor{#2}{center}}
                              {\pgfpointanchor{#4}{center}}
    \global\let\thirdangle\pgfmathresult; % angle along third segment
    		\path (#2) to coordinate[midway](m2) (#5);
		\path (#2) to coordinate[midway](m3) (#4);
		\draw[oriented,#1] (#3) to [out=\firstangle+135,in=\secondangle-135] (m2);
		\draw[oriented,#1] (m2) to [out=\secondangle+135,in=\thirdangle-135] (m3);
		\draw[oriented,#1] (m3) to [out=\thirdangle+135,in=\firstangle-135] (#3);
}
\newcommand{\PostnikovBlackBoundaryThree}[5][]{%A macro to make curved paths around a black vertex of valence 3, which touches the boundary
    \pgfmathanglebetweenpoints{\pgfpointanchor{#2}{center}}
                              {\pgfpointanchor{#3}{center}}
    \global\let\firstangle\pgfmathresult % angle along first line segment
    \pgfmathanglebetweenpoints{\pgfpointanchor{#2}{center}}
                              {\pgfpointanchor{#4}{center}}
    \global\let\secondangle\pgfmathresult; % angle along second segment
    \pgfmathanglebetweenpoints{\pgfpointanchor{#2}{center}}
                              {\pgfpointanchor{#5}{center}}
    \global\let\thirdangle\pgfmathresult; % angle along third segment
		\path (#2) to coordinate[midway](m2) (#4);
		\path (#2) to coordinate[midway](m3) (#5);
		\draw[oriented,#1] (#3) to [out=\firstangle-135,in=\secondangle+135] (m2);
		\draw[oriented,#1] (m2) to [out=\secondangle-135,in=\thirdangle+135] (m3);
		\draw[oriented,#1] (m3) to [out=\thirdangle-135,in=\firstangle+135] (#3);
}
\newcommand{\PostnikovWhiteBIII}[6][]{%A macro to make curved paths around a white vertex of valence 3, which touches the boundary
    \pgfmathanglebetweenpoints{\pgfpointanchor{#2}{center}}
                              {\pgfpointanchor{#3}{center}}
    \global\let\firstangle\pgfmathresult % angle along first line segment
    \pgfmathanglebetweenpoints{\pgfpointanchor{#2}{center}}
                              {\pgfpointanchor{#6}{center}}
    \global\let\secondangle\pgfmathresult; % angle along second segment
    \pgfmathanglebetweenpoints{\pgfpointanchor{#2}{center}}
                              {\pgfpointanchor{#5}{center}}
    \global\let\thirdangle\pgfmathresult; % angle along third segment
    \pgfmathanglebetweenpoints{\pgfpointanchor{#2}{center}}
                              {\pgfpointanchor{#4}{center}}
    \global\let\fourthangle\pgfmathresult; % angle along third segment
    		\path (#2) to coordinate[midway](m2) (#6);
		\path (#2) to coordinate[midway](m3) (#5);
		\path (#2) to coordinate[midway](m4) (#4);
		\draw[oriented,#1] (#3) to [out=\firstangle+135,in=\secondangle-135] (m2);
		\draw[oriented,#1] (m2) to [out=\secondangle+135,in=\thirdangle-135] (m3);
		\draw[oriented,#1] (m3) to [out=\thirdangle+135,in=\fourthangle-135] (m4);
		\draw[oriented,#1] (m4) to [out=\fourthangle+135,in=\firstangle-135] (#3);
}
\newcommand{\PostnikovBlackBBII}[6][]{%A macro to make curved paths around a black vertex of valence 3, which touches the boundary
    \pgfmathanglebetweenpoints{\pgfpointanchor{#2}{center}}
                              {\pgfpointanchor{#3}{center}}
    \global\let\firstangle\pgfmathresult % angle along first line segment
    \pgfmathanglebetweenpoints{\pgfpointanchor{#2}{center}}
                              {\pgfpointanchor{#4}{center}}
    \global\let\secondangle\pgfmathresult; % angle along second segment
    \pgfmathanglebetweenpoints{\pgfpointanchor{#2}{center}}
                              {\pgfpointanchor{#5}{center}}
    \global\let\thirdangle\pgfmathresult; % angle along third segment
    \pgfmathanglebetweenpoints{\pgfpointanchor{#2}{center}}
                              {\pgfpointanchor{#6}{center}}
    \global\let\fourthangle\pgfmathresult; % angle along fourth segment
		\path (#2) to coordinate[midway](m2) (#5);
		\path (#2) to coordinate[midway](m3) (#6);
		\draw[oriented,#1] (#3) to [out=\firstangle-165,in=\secondangle+165] (#4);
		\draw[oriented,#1] (#4) to [out=\secondangle-135,in=\thirdangle+135] (m2);
		\draw[oriented,#1] (m2) to [out=\thirdangle-135,in=\fourthangle+135] (m3);
		\draw[oriented,#1] (m3) to [out=\fourthangle-135,in=\firstangle+135] (#3);
}
\title{Cluster Algebras of Grassmannians are Locally Acyclic}
\author{Greg Muller, David E Speyer}
\address{Department of Mathematics, University of Michigan, 
         Ann Arbor, MI 48109}
\keywords{}
\thanks{}
\theoremstyle{plain}
\newtheorem{theorem}{Theorem}[section]
\newtheorem{conjecture}[theorem]{Conjecture}
\newtheorem{proposition}[theorem]{Proposition}
\newtheorem{lemma}[theorem]{Lemma}
\newtheorem{corollary}[theorem]{Corollary}
\theoremstyle{definition}
\newtheorem{definition}[theorem]{Definition}
\theoremstyle{remark}
\newtheorem{remark}[theorem]{Remark}
\newtheorem{example}[theorem]{Example}
\newcommand{\ZZ}{\mathbb{Z}}
\newcommand{\QQ}{\mathbb{Q}}
\newcommand{\cA}{\mathcal{A}}
\newcommand{\Q}{\mathsf{Q}}
\newcommand{\tQ}{\widetilde{\mathsf{Q}}}
\newcommand{\newword}[1]{\textbf{\emph{#1}}}
\newcommand{\A}{\mathcal{A}}
\DeclareMathOperator{\Spec}{Spec}
\def\Pio{\mathring{\Pi}}
\begin{document}

\begin{abstract}
Considered as commutative algebras, cluster algebras can be very unpleasant objects.
However, the first author introduced a condition known as ``local acyclicity" which implies that cluster algebras behave reasonably.
One of the earliest and most fundamental examples of a cluster algebra is the homogenous coordinate ring of the Grassmannian.
We show that the Grassmannian is locally acyclic. Morally, show the stronger result that all positroid varieties are locally acyclic. However, it has not been shown that all positroid varieties have cluster structure in the expected manner, so what we actually prove is that certain cluster varieties associated to Postnikov's alternating strand diagrams are locally acylic.
We actually establish a slightly stronger property than local acyclicity, that is designed to facilitate proofs involving the Mayer-Vietores sequence.
\end{abstract}

\maketitle

\section{Introduction}

A cluster algebra is a commutative algebra $A$ over a field $k$, equipped with a collection of subsets of $A$ known as \newword{clusters}. (In this paragraph we are describing cluster algebras, not defining them; definitions can be found in Section~\ref{ClusterBackground}. Also, there are many slightly different definitions in the literature.)
Each cluster $(x_1, x_2, \ldots, x_r)$ of $A$ has the property that $A$ is contained in the Laurent polynomial ring $k[x_1^{\pm 1}, x_2^{\pm 1}, \ldots, x_r^{\pm 1}]$ and there are combinatorial rules describing how to take one cluster and obtain another. 
The elements of the clusters are known as \newword{cluster variables}.
Geometrically, one should think of each cluster as describing an open torus inside $\Spec A$. 

What if one inverts not all the variables in a cluster, but just one cluster variable $x$? $\Spec A[x^{-1}]$ is an open subvariety of $\Spec A$. 
In nice cases, $A[x^{-1}]$ is itself a cluster algebra, whose clusters are those clusters of $A$ that contain $x$.
What would be particularly nice would be if we had two cluster variables $x_1$ and $x_2$, which had no common zeroes in $\Spec A$, so that $A[x_1^{-1}]$ and $A[x_2^{-1}]$ were both cluster algebras, as then $\Spec A$ would be the union of two open subvarieties, $\Spec A[x_1^{-1}]$ and $\Spec A[x_2]^{-1}$, which are simpler cluster algebras.
Even nicer than that would be if $A[x_1^{-1}, x_2^{-1}]$ were also a cluster algebra, so that we would have a Mayer-Vietores decomposition $\Spec A = \Spec A[x_1^{-1}] \cup \Spec A[x_2^{-1}]$, $\Spec A[x_1^{-1}, x_2^{-1}] = \Spec A[x_1^{-1}] \cap \Spec A[x_2^{-1}]$ all of whose terms were cluster algebras.
In an ideal world, each of these simpler cluster algebras would, in turn, have such a Mayer-Vietores decomposition and so on.

In Section~\ref{ClusterBackground}, we will define what it means for a cluster algebra to be \newword{Louise}.\footnote{The first author described the \newword{Banff algorithm}, named for the city in which it was found. The Banff algorithm searches for cluster variables $x_1$ and $x_2$ such that $\Spec A = \Spec A[x_1^{-1}] \cup \Spec A[x_2^{-1}]$; we will define a cluster algebra to be \newword{Banff} when the Banff algorithm succeeds. Lake Louise is the prettiest part of the city of Banff, and we thus define  a cluster algebra to be \newword{Louise} if it is Banff in the best possible way, permitting Mayer-Vietores arguments.} 
In a Louise cluster algebra, the ideal situation of the preceding paragraph holds.

One of the earliest motivating examples of a cluster algebra was the homogenous coordinate ring of the Grassmannian in its Pl\"ucker embedding \cite{Sco06}.
A major result of this paper is that this cluster algebra is Louise.

Because the definition of cluster algebra used in this (and many other papers) has changed in the last decade, from our perspective, the cluster algebra in question is actually the coordinate ring of the open subvariety of the Grassmannian where the $n$ Pl\"ucker coordinates $p_{123 \cdots k}$, $p_{234 \cdots k(k+1)}$, \dots, $p_{(n-k+1) \cdots n}$, $p_{(n-k+2) \cdots n1}$, \dots, $p_{n12\cdots (k-1)}$ are nonzero.
This open locus is the largest \newword{positroid cell}. 
The Grassmannian is stratified into numerous affine subvarieties known as \newword{positroid cells}.

It is expected that the homogeneous coordinate ring of every positroid variety is a cluster algebra, and that the structure of these cluster algebras will be described by combinatorial objects known as Postnikov diagrams (also called alternating strand diagrams). 
There is, unambiguously, a cluster algebra associated to any Postnikov diagram; the open question is whether this cluster algebra is the homogeneous coordinate ring of the open positroid variety.
Leclerc~\cite{Lec14} has recently constructed a cluster structure in the coordinate ring of any positroid variety; it is not yet clear what the precise relation is between Leclerc's construction and Postnikov diagrams.
In this paper, we will avoid this issue, but we will show that the cluster algebra associated to any Postnikov diagram is Louise.
Thus, once it is proved that these cluster algebras are the coordinate rings of open positroid varieties, we will know that every open positroid variety is Louise.
For the largest open positroid variety, described in the previous paragraph, Scott's result~\cite{Sco06} establishes that the cluster algebra is the coordinate ring of the open positroid variety. So we have shown that this open locus in the Grassmannian is a locally acyclic cluster algebra, as promised in our title.

We also remark that double Bruhat cells for $GL_n$~\cite{BFZ05} can be realized as positroid varieties~\cite[Section 6]{KLS13}, \cite{BGY06}, with quivers coming from Postnikov diagrams.
Hence our result also shows that double Bruhat cells are locally acyclic.

In Section~\ref{ClusterBackground}, we review background on cluster algebras, local acyclicity and the Louise condition.
In Section~\ref{PositroidBackground}, we review background on positroid varieties and Postnikov diagrams.
In Section~\ref{Main}, we prove the main theorem.
We note that the original results of this paper are almost entirely combinatorial; the geometry which we have discussed in this introduction is mostly outsourced to earlier papers. 
The reader who hates or fears algebraic geometry should still be able to understand our proof, although not its motivation.

\section{Cluster algebras and local acyclicity} \label{ClusterBackground}

%Cluster algebras are subrings of fields whose generators are constructed in a recursive manner.  Taking some combinatorial data as input, this recursion produces a (typically infinite) collection of \newword{clusters}: sets of elements in a fixed field of rational functions over $\QQ$ which form a free generating set.   The \newword{cluster algebra} is (roughly\footnote{Depending on conventions, some inverses should also be included.}) the subring generated by the set of elements in any cluster, called \newword{cluster variables}.
%
%Over the last twelve years, cluster algebras and the cluster variables within them have been shown to have many remarkable combinatorial properties, with still more conjectured.  They have also been discovered inside the coordinate rings of many notable algebraic varieties, most relevantly in the homogeneous coordinate rings of Grassmannians \cite{Scott}.

The specific form of cluster algebras we use in this note is that of \emph{skew-symmetric} cluster algebras of \emph{geometric type}, and we define them using \emph{ice quivers}; a more thorough introduction to this approach can be found in \cite{Kel12}.

A cluster algebra determines, and is determined by, a \emph{mutation-equivalence class of seeds} in some field $\mathcal{F}$.  A \newword{seed} in $\mathcal{F}$ consists of the following data.
\begin{itemize}
	\item A \newword{quiver} $\tQ$ without loops or directed 2-cycles.
	\item A subset of the vertices of $\tQ$ designated as \newword{frozen}; the rest are called \newword{mutable}.
	\item A bijection from the vertices of $\tQ$ to a free generating set $\mathbf{x}$ of the field $\mathcal{F}$ over $\QQ$.  The image $x_i$ of a vertex $i$ is called the \newword{cluster variable} at that vertex, and the set $\mathbf{x}=\{x_1,x_2,...,x_n\}$ is called a \newword{cluster}.
\end{itemize}
\noindent The data of the quiver $\tQ$ and the designation of vertices as frozen and mutable is known as an \newword{ice quiver}.
The \newword{mutable quiver} $\Q$ of a seed is the induced subquiver on the mutable vertices.%, and the \newword{mutable rank} is the number of vertices in the mutable quiver.

With the above notation, if $i$ is a mutable vertex of $\tQ$, we'll write $\tQ[i^{-1}]$ for the same quiver $\tQ$ with the vertex $i$ designated as frozen. 
The mutable part of $\tQ[i^{-1}]$ is thus the quiver obtained by deleting vertex $i$ from $\Q$; we will denote this by $\Q[i^{-1}]$.\footnote{The notation $\Q[i^{-1}]$ is ambiguous if $\Q$ is a directed graph encountered in the abstract, without either being treated as an ice quiver or as being the mutable part of a quiver, but this will never cause ambiguity for us.}
See Theorem~\ref{Banff} for the motivation for this notation.

We need the following graph theoretic definition: If $\Q$ is a quiver, then a \newword{bi-infinite directed path} in $\Q$ is a sequence of arrows indexed by $\ZZ$, such that the target of the $i$-th arrow is the source of the $(i+1)$-st arrow.

A seed may be \newword{mutated} at any mutable vertex, yielding a new seed in the same field, and iterated mutation generates an equivalence relation among seeds (see \cite{Kel12} for details).  Given a mutation-equivalence class of seeds in $\mathcal{F}$, the corresponding \newword{cluster algebra} $\cA$ is the subring of the field $\mathcal{F}$ generated by the cluster variables in those seeds and the inverses to the frozen variables.  Note that, up to canonical isomorphism, the cluster algebra $\cA$ only depends on the ice quiver of a single seed.

The \newword{exchange type} of a cluster algebra is its set of mutable quivers, which may be regarded as an equivalence class of quivers, up to the mutation.
%An exchange type is an equivalence class for the restriction of seed mutation to quivers.  
Many properties of a cluster algebra are known to depend only on its exchange type, including the \emph{Louise} property considered in this note.  

In \cite{MulLA}, the first author introduced the class of locally acyclic cluster algebras.  A cluster algebra $\cA$ is \newword{locally acyclic} if there are localizations $\cA[z_1^{-1}]$, $\cA[z_2^{-1}]$, \dots, and $\cA[z_M^{-1}]$, where each $z_M$ is a product of cluster variables within a single cluster, such that 
\begin{enumerate}
	\item Each $\cA[z_i^{-1}]$ is a cluster algebra with a seed whose mutable quiver has no directed cycle (these are called \emph{acyclic cluster algebras}).
	\item $\text{Spec}(\cA) = \bigcup_{i=1}^n \text{Spec}(\cA[z_i^{-1}]) $.
\end{enumerate}
Known local algebro-geometric properties of these elementary cluster algebras can then be extended to cluster algebras in this larger class.
\begin{theorem}\label{thm:LACA} \cite{MulLA}
If $\cA$ is a locally acyclic cluster algebra, then $\cA$ is finitely generated, normal, locally a complete intersection, and equal to its own upper cluster algebra (the relevant definitions may be found in \emph{loc. cit.}).
\end{theorem}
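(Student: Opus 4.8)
The plan is to reduce all four assertions to the acyclic case and then transport them along the covering localizations $\cA[z_1^{-1}],\dots,\cA[z_M^{-1}]$ furnished by the definition of local acyclicity. Write $\overline{\cA}$ for the upper cluster algebra of $\cA$, the intersection $\bigcap_{\mathbf{x}}k[x_1^{\pm 1},\dots,x_r^{\pm 1}]$ over all clusters $\mathbf{x}$ of $\cA$ inside $\mathcal{F}$. By the Laurent phenomenon $\cA\subseteq\overline{\cA}$, and since each $k[x_1^{\pm 1},\dots,x_r^{\pm 1}]$ is a normal domain with fraction field $\mathcal{F}$, the intersection $\overline{\cA}$ is itself a normal domain; thus once we show $\cA=\overline{\cA}$, normality is automatic. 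The acyclic case is classical, by Berenstein--Fomin--Zelevinsky \cite{BFZ05}: if some seed has a mutable quiver without directed cycle, then $\cA=\overline{\cA}$ and this ring has an explicit presentation exhibiting it as a finitely generated, normal, complete intersection $k$-algebra (hence Noetherian and locally a complete intersection). By hypothesis each $\cA[z_i^{-1}]$ is acyclic, so each already has all four properties, and in particular $\cA[z_i^{-1}]=\overline{\cA[z_i^{-1}]}$.

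Next, the descent along the cover. Because $\Spec\cA=\bigcup_i\Spec\cA[z_i^{-1}]=\bigcup_i D(z_i)$, no prime of $\cA$ contains all of $z_1,\dots,z_M$, so $(z_1,\dots,z_M)=\cA$; fix $a_i\in\cA$ with $1=\sum_ia_iz_i$. For finite generation, let $B\subseteq\cA$ be the $k$-subalgebra generated by the $z_i$, the $a_i$, and lifts to $\cA$ of finite generating sets of the $\cA[z_i^{-1}]$; then $B[z_i^{-1}]=\cA[z_i^{-1}]$ for every $i$, and a partition-of-unity argument---raise $1=\sum a_iz_i$ to a high power to obtain $1=\sum_jd_jz_j^N$ with $d_j\in B$, and note $z_j^Nf\in B$ for any $f\in\cA$ once $N$ is large---forces $B=\cA$, so $\cA$ is finitely generated over $k$, hence Noetherian. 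The local complete intersection property can then be checked at each prime $\mathfrak{p}$ of $\cA$, which lies in some $D(z_i)$ where $\cA_{\mathfrak{p}}$ agrees with a localization of $\cA[z_i^{-1}]$; since the latter is locally a complete intersection, so is $\cA$.

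It remains to prove $\overline{\cA}\subseteq\cA$. The essential input is the compatibility of the upper cluster algebra with cluster localization: for $z$ a product of cluster variables lying in a common cluster, $\overline{\cA}[z^{-1}]=\overline{\cA[z^{-1}]}$, since the clusters of $\cA[z^{-1}]$ arise from those of $\cA$ by freezing the relevant vertices while the defining intersection is unchanged by inverting $z$. With the acyclic case this gives $\overline{\cA}[z_i^{-1}]=\overline{\cA[z_i^{-1}]}=\cA[z_i^{-1}]$ for each $i$. Now take $f\in\overline{\cA}$: for every $i$ we have $f\in\cA[z_i^{-1}]$, hence $z_i^Nf\in\cA$ for $N$ large and uniform in $i$; since $(z_1,\dots,z_M)=\cA$ forces $(z_1^N,\dots,z_M^N)=\cA$, we may write $1=\sum_jd_jz_j^N$ with $d_j\in\cA$ and conclude $f=\sum_jd_j(z_j^Nf)\in\cA$. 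Hence $\cA=\overline{\cA}$, which settles normality as well.

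The substantive content lives in two places: the Berenstein--Fomin--Zelevinsky structure theorem for acyclic cluster algebras, and the localization compatibility $\overline{\cA}[z^{-1}]=\overline{\cA[z^{-1}]}$, which is the ``sheaf axiom'' that lets the local information glue; everything else is elementary bookkeeping with the unit-ideal relation $1=\sum a_iz_i$. I expect that compatibility statement to be the main obstacle, as it demands a careful comparison between the clusters of $\cA$ and those of its cluster localizations, together with the fact that freezing a vertex produces exactly the ring obtained by inverting the corresponding cluster variable.
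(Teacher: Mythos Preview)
The paper does not prove this theorem; it simply quotes it from \cite{MulLA}, where the first author establishes it. Your proposal is therefore not being compared to a proof in this paper but rather to the argument in the cited reference, and it tracks that argument faithfully: reduce to the acyclic case via \cite{BFZ05}, use the covering $\Spec(\cA)=\bigcup_i\Spec(\cA[z_i^{-1}])$ to translate the unit-ideal relation $1=\sum a_iz_i$ into partition-of-unity bookkeeping, and invoke the compatibility of the upper cluster algebra with cluster localization to glue.

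Your self-diagnosis is accurate: the only nontrivial step beyond \cite{BFZ05} is the equality $\overline{\cA}[z^{-1}]=\overline{\cA[z^{-1}]}$, and your one-line justification (``the defining intersection is unchanged by inverting $z$'') is too fast. Localization does not commute with infinite intersections in general, and the clusters of $\cA[z^{-1}]$ form a strict subset of the clusters of $\cA$, so one has to argue that restricting to this subset does not enlarge the intersection after inverting $z$. In \cite{MulLA} this is handled via the Starfish/upper-bound lemma, which shows the upper cluster algebra is already computed by the Laurent rings of a single seed together with its one-step mutations; once one knows this, freezing a vertex and inverting the corresponding variable visibly commute. You flagged exactly this gap, so there is no error of understanding, only an unfinished step that requires the upper-bound lemma to close.
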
 
\noindent Recent work \cite{BMRS14} has also shown that locally acyclic cluster algebras have (at worst) canonical singularities.

In \cite{MulLA}, the principal tool for demonstrating local acyclicity is the \emph{Banff algorithm}.  This is based on the following observation:
\begin{theorem} \label{Banff} \cite{MulLA}
Let $\cA$ be a cluster algebra and $\tQ$ the quiver of some seed. Suppose that the mutable quiver $\Q$ contains two vertices $s$ and $t$ connected by an arrow which is not in any bi-infinite directed path of $\Q$. Then $\Spec(\cA[x_s^{-1}])$ and $\Spec(\cA[x_t^{-1}])$ cover $\Spec(\cA)$.  
Furthermore, if the cluster algebras corresponding to the ice quivers $\tQ[s^{-1}]$ and $\tQ[t^{-1}]$ are locally acyclic, then they are equal to $\cA[x_s^{-1}]$ and $\cA[x_t^{-1}]$ respectively.
\end{theorem}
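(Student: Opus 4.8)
The plan is to treat the two assertions in turn. For the covering statement it suffices to produce $a,b\in\cA$ with $ax_s+bx_t=1$: this says $(x_s,x_t)=\cA$, i.e.\ the vanishing loci of $x_s$ and $x_t$ in $\Spec(\cA)$ are disjoint, which is precisely the assertion that $\Spec(\cA[x_s^{-1}])$ and $\Spec(\cA[x_t^{-1}])$ cover $\Spec(\cA)$. Relabel so that the arrow runs $s\to t$. A finite quiver contains an infinite directed path exactly when it contains a directed cycle, so $\alpha$ lies on a bi-infinite directed path if and only if there is both a directed walk from some cycle to $s$ \emph{and} a directed walk from $t$ to some cycle; hence the hypothesis kills (at least) one of these. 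Reversing all arrows of $\tQ$ leaves $\cA$, and therefore the claim, unchanged — the exchange relations merely swap their two monomials — while interchanging these two conditions, so we may assume that no directed cycle of $\Q$ is reachable from $t$. Let $R$ be the set of vertices of $\Q$ reachable from $t$; then the induced subquiver $\Q|_R$ is acyclic and $t$ is one of its sources. I would now induct on $|R|$. If $R=\{t\}$ then $t$ is a sink of $\Q$, and mutating at $t$ yields $x_tx_t^{\ast}=x_sM+u$, where $M\in\cA$ accounts for the remaining arrows into $t$ and $u$ — a product of frozen variables — is a unit of $\cA$; then $(u^{-1}x_t^{\ast})x_t+(-u^{-1}M)x_s=1$ as required. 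If $|R|\ge 2$, pick a sink $v\ne t$ of $\Q|_R$ (one exists, since $\Q|_R$ is a nonempty acyclic quiver in which $t$ is a source) and mutate at $v$; since $v$ has no out-arrows to mutable vertices, this reverses the arrows at $v$ without creating any arrow between two mutable vertices, so in particular $s\to t$ survives, the vertices reachable from $t$ in the new mutable quiver are exactly $R\setminus\{v\}$, the relevant induced subquiver is again acyclic, and $\cA$, $x_s$, $x_t$ are untouched — so the inductive hypothesis applies to the new seed and finishes the covering statement.

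For the \emph{Furthermore}, write $\cA_s=\cA(\tQ[s^{-1}])$, the cluster algebra of the same quiver with $s$ adjoined to the frozen set, and similarly $\cA_t$. First, running any mutation sequence that avoids $s$ produces identical cluster variables for $\cA$ and for $\cA_s$ (the exchange relations do not see the frozen/mutable distinction), so every generator of $\cA_s$ — its cluster variables, together with $x_s^{-1}$ and the inverses of the original frozen variables — lies in $\cA[x_s^{-1}]$; hence $\cA_s\subseteq\cA[x_s^{-1}]$. Conversely, the Laurent phenomenon gives $\cA\subseteq\mathcal{U}(\cA)$, and intersecting the Laurent expansions only over the seeds that do not mutate $s$ — in each of whose clusters $x_s$ already appears, so that inverting $x_s$ changes nothing — shows $\cA[x_s^{-1}]\subseteq\mathcal{U}(\cA_s)$. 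Since $\cA_s$ is locally acyclic by hypothesis, Theorem~\ref{thm:LACA} gives $\mathcal{U}(\cA_s)=\cA_s$, and so the chain $\cA_s\subseteq\cA[x_s^{-1}]\subseteq\mathcal{U}(\cA_s)=\cA_s$ collapses to $\cA[x_s^{-1}]=\cA_s$. The same argument gives $\cA[x_t^{-1}]=\cA_t$.

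The main obstacle I anticipate is the inductive step of the covering statement: one must check carefully that mutating at a sink of $\Q|_R$ neither destroys the arrow $s\to t$ nor enlarges the set of vertices reachable from $t$, and that acyclicity of the relevant induced subquiver is preserved — this bookkeeping about how mutation interacts with reachability is exactly where the ``no bi-infinite directed path'' hypothesis does its work, and getting the case analysis (frozen versus mutable neighbours of $v$, arrows into $v$ from outside $R$, arrows created by the mutation) exactly right is the delicate part. The second half is comparatively routine once one invokes the cited fact that a locally acyclic cluster algebra equals its own upper cluster algebra.
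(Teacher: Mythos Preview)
The paper does not itself prove Theorem~\ref{Banff}; it is imported from \cite{MulLA}, as the citation attached to the theorem indicates (and as the later references to \cite[Corollary~5.4]{MulLA} and \cite[Lemma~3.4]{MulLA} confirm). Your argument is correct and is, in outline, the argument of that reference: the covering statement is established by exactly the sink-mutation induction you describe (reduce by arrow reversal to the case where $t$ reaches no cycle; mutate away sinks of the reachable set until $t$ itself is a sink, at which point the exchange relation exhibits $1$ as an $\cA$-combination of $x_s$ and $x_t$), and the ``Furthermore'' is the sandwich $\cA_s\subseteq\cA[x_s^{-1}]\subseteq\mathcal{U}(\cA_s)$ collapsed by $\cA_s=\mathcal{U}(\cA_s)$ from Theorem~\ref{thm:LACA}.

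Your worry about the inductive step is well placed, but the analysis holds up. The points that make it work are: (i) since $R$ is closed under out-neighbours in $\Q$, a sink of $\Q|_R$ is already a sink of all of $\Q$; (ii) since $s\to t$ is an arrow and $\Q|_R$ is acyclic, $s\notin R$, so $v\ne s$ and the arrow $s\to t$ is untouched by mutation at $v$; (iii) mutation at a sink $v$ of $\Q$ creates only arrows with frozen target, leaves every other mutable--mutable arrow intact, and makes $v$ a source, so in the mutated mutable quiver the set reachable from $t$ is exactly $R\setminus\{v\}$, with induced subquiver $\Q|_{R\setminus\{v\}}$, still acyclic. With these three checks the induction is clean.
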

%Hence, if these localizations are locally acyclic cluster algebras, then $\cA$ is a locally acyclic cluster algebra.  

Of course, it may not be clear whether the localizations are locally acyclic, so the natural idea is to iterate the previous argument, refining the cover.  If this \newword{Banff algorithm} can eventually produce a cover by acyclic cluster algebras, we say the cluster algebra is \newword{Banff}.
The Banff property only depends on the exchange type of a cluster algebra, and can be checked using only quivers.\footnote{It is unknown to the authors whether the local acyclicity property only depends on the exchange type.}
 
We have a nicer condition in mind, however.  For future applications based on Mayer-Vietoris sequences, we will need to consider not just open covers, but some intersections between open sets in those covers.  To that end, we define the notion of a Louise quiver. 

\begin{definition}\label{defn: Louise}
The class of \newword{Louise} quivers is the smallest class of quivers satisfying the following conditions.
\begin{enumerate}
%	\item The empty quiver is Louise.
%	\item The union of a Louise quiver with an isolated vertex is Louise.
         \item Any edgeless quiver is Louise.
	\item The mutation of any Louise quiver is Louise; that is, the class is closed under mutation.
	\item Let $\Q$ be a quiver with an arrow $\alpha$ from $s$ to $t$, such that $\alpha$ is not contained in any bi-infinite directed path.  If $\Q[t^{-1}]$, $\Q[s^{-1}]$ and $\Q[s^{-1},t^{-1}]$ are all Louise, then $\Q$ is Louise.
\end{enumerate}
A cluster algebra is \newword{Louise} if its exchange type consists of Louise quivers.
\end{definition}
\begin{remark}
If part $(3)$ of Definition \ref{defn: Louise} only checked $\Q[s^{-1}]$ and $\Q[t^{-1}]$, it would instead define the class of Banff quivers and cluster algebras.
\end{remark}
\begin{remark}
In practice, we will only ever use a simpler but less general version of Condition (3), where the vertex $s$ is a source.  This immediately implies that $\alpha$ is not in any bi-infinite path.  This less general condition also has stronger consequences; see Remark \ref{rem: nondeg}.
\end{remark}

Like algebras obeying the Banff condition, Louise cluster algebras are locally acyclic.

\begin{proposition}
A Louise cluster algebra is locally acyclic.
\end{proposition}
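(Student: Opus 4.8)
The plan is to argue by induction on the recursive structure of the class of Louise quivers. Let $\mathcal{P}$ denote the class of quivers $\Q$ with the property that \emph{every} cluster algebra possessing a seed whose mutable quiver is $\Q$ is locally acyclic. Since, by Definition~\ref{defn: Louise}, the class of Louise quivers is the \emph{smallest} class of quivers closed under conditions (1)--(3), it suffices to verify that $\mathcal{P}$ is itself closed under those three conditions; it will then contain every Louise quiver, and hence every Louise cluster algebra will be locally acyclic. An essentially equivalent and perhaps cleaner packaging: one shows $\mathcal{P}$ contains the class of \emph{Banff} quivers (the smallest class satisfying (1), (2), and the weakening of (3) that drops the hypothesis on $\Q[s^{-1},t^{-1}]$, as in the first Remark after Definition~\ref{defn: Louise}), so that ``Louise $\Rightarrow$ Banff $\Rightarrow$ locally acyclic'', the final implication being precisely the content of the Banff algorithm of \cite{MulLA}.

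For condition (1): an edgeless quiver contains no directed cycle, so any cluster algebra with a seed having that mutable quiver is acyclic, and an acyclic cluster algebra is locally acyclic via the trivial one-term cover. For condition (2): if $\Q'$ is obtained from $\Q \in \mathcal{P}$ by mutation at a mutable vertex and $\cA$ is a cluster algebra with a seed whose mutable quiver is $\Q'$, then mutating that seed produces a seed of the \emph{same} algebra $\cA$ whose mutable quiver is $\Q$; since $\Q \in \mathcal{P}$, $\cA$ is locally acyclic, so $\Q' \in \mathcal{P}$. (Here one uses only that mutation does not change the underlying cluster algebra.)

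The substantive case is condition (3). Suppose $\Q$ has an arrow $\alpha$ from $s$ to $t$ lying in no bi-infinite directed path of $\Q$, and that $\Q[s^{-1}],\Q[t^{-1}]\in\mathcal{P}$; the hypothesis on $\Q[s^{-1},t^{-1}]$ will not be used, which is exactly the part of the Louise condition superfluous for local acyclicity. Let $\cA$ have a seed with ice quiver $\tQ$ and mutable quiver $\Q$. Since $\Q[s^{-1}],\Q[t^{-1}]\in\mathcal{P}$, the cluster algebras attached to the ice quivers $\tQ[s^{-1}]$ and $\tQ[t^{-1}]$ are locally acyclic, so by the ``furthermore'' clause of Theorem~\ref{Banff} these equal $\cA[x_s^{-1}]$ and $\cA[x_t^{-1}]$, and by the first clause of Theorem~\ref{Banff} we have $\Spec(\cA)=\Spec(\cA[x_s^{-1}])\cup\Spec(\cA[x_t^{-1}])$. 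It remains to glue: local acyclicity of $\cA[x_s^{-1}]$ gives localizations $\cA[x_s^{-1}][w_j^{-1}]=\cA[(x_s w_j)^{-1}]$ that are acyclic and cover $\Spec(\cA[x_s^{-1}])$, where each $w_j$ is a product of cluster variables of $\cA[x_s^{-1}]$ within a single cluster, so that $x_s w_j$ is a product of cluster variables of $\cA$ within a single cluster; similarly for $t$. The union of these two families is the required cover of $\Spec(\cA)$ by acyclic localizations, so $\cA$ is locally acyclic and $\Q\in\mathcal{P}$. This gluing step is the standard propagation of local acyclicity along the covers produced by Theorem~\ref{Banff}.

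I do not anticipate a genuine obstacle: granting Theorem~\ref{Banff}, the statement is a formal induction, and the only points needing care are bookkeeping ones — that mutation leaves the cluster algebra unchanged (making (2) immediate), that the elements $x_s w_j$ arising in the gluing are indeed products of cluster variables within a single cluster of $\cA$ (which follows from the identification of the clusters of $\cA[x_s^{-1}]$ guaranteed by Theorem~\ref{Banff}), and that the $\Q[s^{-1},t^{-1}]$ hypothesis in condition (3) is irrelevant for local acyclicity.
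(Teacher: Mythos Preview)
Your proof is correct and follows essentially the same approach as the paper: both invoke Theorem~\ref{Banff} to produce the cover $\Spec(\cA)=\Spec(\cA[x_s^{-1}])\cup\Spec(\cA[x_t^{-1}])$ and note that the $\Q[s^{-1},t^{-1}]$ hypothesis goes unused for local acyclicity. The only difference is organizational---you package the argument as a structural induction on the recursive definition of Louise, whereas the paper inducts on the number of mutable vertices and observes that a Louise quiver with arrows must be mutation-equivalent to one where clause~(3) applies---but the substantive content is identical.
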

\begin{proof}
The proof is by induction on the number of vertices $m$ in any mutable quiver of $\cA$.  If $m=0$, then $\Q$ is empty and so it is automatically acyclic and locally acyclic.  Next, assume for induction that every Louise cluster algebra with fewer than $m$ vertices in any mutable quiver is locally acyclic. 
If $\Q$ has no arrows, then it is acyclic and automatically locally acyclic.

If $\Q$ is Louise and has any arrows, then there must be a mutation-equivalent quiver $\Q'$ for which part $(3)$ of Definition \ref{defn: Louise} applies; let $s$, $t$ and $\alpha$ be as above.  It follows that $\cA$ has a seed with mutable quiver $\Q'$.   By \cite[Corollary 5.4]{MulLA}, $\Spec(\cA[x_s^{-1}])$ and $\Spec(\cA[x_t^{-1}])$ cover $\Spec(\cA)$, and they have mutable quivers $\Q[s^{-1}]$ and $\Q[t^{-1}]$.  Since the quivers $\Q[s^{-1}]$ and $\Q[t^{-1}]$ are Louise with fewer vertices than $\Q$, any cluster algebra with that exchange type is locally acyclic.  Hence, by \cite[Lemma 3.4]{MulLA}, $\cA[x_s^{-1}]$ and $\cA[x_t^{-1}]$ are locally acyclic cluster algebras.  Then $\Spec(\cA)$ has a cover by the spectra of locally acyclic cluster algebras, so $\A$ is locally acyclic.
\end{proof}

The Louise property was first formulated in work of the second author with Thomas Lam~\cite{LS14}.
It will feature prominently in their work, currently in preparation, on de Rham cohomology of cluster varieties.

\section{Postnikov diagrams and bounded affine permutations} \label{PositroidBackground}

Fix positive integers $k$ and $n$, with $0 < k < n$. We define a \newword{bounded affine permutation} of type $(k,n)$ to be a map $w: \ZZ \to \ZZ$ obeying the conditions
\begin{enumerate}
\item $i \leq w(i) \leq i+n$.
\item $w(i+n)=w(i)+n$, and so $w$ descends to a permutation of $\ZZ/n$.
\item $\frac{1}{n} \sum_{i=1}^{n} (w(i)-i) = k$.
\end{enumerate}
We will also write $w$ for the resulting permutation of $\ZZ/n$. 
The Grassmannian $G(k,n)$ is stratified into locally closed subvarieties called open positroid varieties, which are indexed by bounded affine permutations of type $(k,n)$.
See~\cite{Pos} and~\cite{KLS13}.

The \newword{length} of a bounded affine permutation $w$, denoted $\ell(w)$, is
\[ \# \{ 1 \leq i \leq n,\ i < j  \ :\  w(i) > w(j) \} \]
The unique bounded affine permutation of type $(k,n)$ of length $0$ is $x \mapsto x+k$; the maximum possible length of a  bounded affine permutation of type $(k,n)$ is $k(n-k)$, achieved by permutations which descend to the identity modulo $n$. 
Our proof of Theorem~\ref{MainTheorem} is by reverse induction on length, among other things.

Given a $k$-plane $L$ in $n$-space, we represent $L$ as the row span of a $k \times n$ matrix $M$. 
For integers $i \leq j \leq i+n$, we write $M_{ij}$ for the $k \times (j-i+1)$ matrix made up of the columns of $a$ whose positions lie in the interval $[i,j]$ modulo $n$.
We define $r_{ij}(L)$ to be the rank of $M_{ij}$; this quantity is independent of the choice of representative $L$ for $M$.
We define the \newword{positroid cell} $\Pio(w)$ to be the set of $L$ in $G(k,n)$ with $r_{ij}(L) = (j-i+1) - \#( [i,j] \cap w([i,j]) )$. 

Each $\Pio(w)$ is an irreducible, locally closed, algebraic variety. 
The dimension of $\Pio(w)$ is $k(n-k) - \ell(w)$.
See \cite{KLS13} for numerous alternate definitions of positroid cells.

It is anticipated that the homogeneous coordinate ring of an positroid cell is naturally a cluster algebra. We now describe how to construct seeds for this cluster algebra.
Let $w$ be a bounded affine permutation. A \newword{Postnikov diagram}\footnote{Postnikov uses the terms ``alternating strand diagram" for (1), (2) and (3), with the additional possibility that closed loops within the disc are allowed.  The absence of loops and condition (4) are equivalent to his ``leafless reduced'' condition on plabic graphs.  We follow Scott~\cite{Sco06} in terming these Postnikov diagrams.
%Postnikov uses the terms ``alternating strand diagram" for (1), (2) and (3), with the additional possibility that there are also allowed to be closed loops within the disc; he uses ``reduced alternating strand diagram" for what we call a Postnikov diagram (including the condition that there are no closed loops). We follow Scott~\cite{Scott} in terming these Postnikov diagrams.
} for $w$ consists of a disc with $n$ marked points $\ell_1$, $\ell_2$, \dots, $\ell_n$ around the boundary (in circular order) and $n$ directed paths called \newword{strands} within the disc, one from $i$ to $w(i)$ for each $i$ from $1$ to $n$. These strands obey the following conditions:
\begin{enumerate}
\item There are no triple crossings between strands. Any two strands cross transversely, and do so finitely many times.
\item If we follow any given strand, the other strands alternately cross it from the left and from the right.
\item No strand crosses itself, except that, if $\pi(i)=i$ or $i+n$, the strand leaving $\ell_i$ will return to $\ell_i$ again. If $\pi(i)=i$, we require that the strand from $\ell(i)$ to itself circle counter-clockwise, otherwise we require that it circle clockwise. 
\item If we consider any two strands $\gamma$ and $\delta$, with some finite list of intersection points, then they pass through their intersection points in opposite orders.
\end{enumerate}
We will refer to the strand from $w^{-1}(i)$ to $i$ as the $i$-th strand.  Some examples of Postnikov diagrams are given in Figure \ref{fig: PDexamples}.
 
\begin{figure}[h!t]
\begin{tikzpicture}
\begin{scope}[scale=.65]
		\draw (0,0) circle (4);
		\node[invisible] (1) at (180:4) {};
		\node[invisible] (2) at (120:4) {};
		\node[invisible] (3) at (60:4) {};
		\node[invisible] (4) at (0:4) {};
		\node[invisible] (5) at (-60:4) {};
		\node[invisible] (6) at (-120:4) {};
		
		\node[left] at (1) {$\ell_1$};
		\node[above left] at (2) {$\ell_2$};
		\node[above right] at (3) {$\ell_3$};
		\node[right] at (4) {$\ell_4$};
		\node[below right] at (5) {$\ell_5$};
		\node[below left] at (6) {$\ell_6$};

		\node[invisible] (a) at (165:2.5) {};
		\node[invisible] (b) at (135:2.5) {};
		\node[invisible] (c) at (45:2.5) {};
		\node[invisible] (d) at (15:2.5) {};
		\node[invisible] (e) at (180:1.25) {};
		\node[invisible] (f) at (120:1.25) {};
		\node[invisible] (g) at (60:1.25) {};
		\node[invisible] (h) at (0:1.25) {};
		\node[invisible] (i) at (300:1.25) {};
		\node[invisible] (j) at (240:1.25) {};
		\node[invisible] (k) at (255:2.5) {};
		\node[invisible] (l) at (285:2.5) {};

		\PostnikovWhiteBoundaryThree{a}{1}{b}{e};
		\PostnikovBlackBoundaryThree{b}{2}{f}{a};
		\PostnikovWhiteBoundaryThree{c}{3}{d}{g};
		\PostnikovBlackBoundaryThree{d}{4}{h}{c};
		\PostnikovBlack{e}{a}{f}{j};
		\PostnikovWhite{f}{b}{g}{e};
		\PostnikovBlack{g}{c}{h}{f};
		\PostnikovWhite{h}{g}{d}{i};
		\PostnikovBlack{i}{l}{j}{h};
		\PostnikovWhite{j}{e}{i}{k};
		\PostnikovBlackBoundaryThree{k}{6}{j}{l};
		\PostnikovWhiteBoundaryThree{l}{5}{k}{i};
		
		\node at (0,-6) {\shortstack{A Postnikov diagram for \\ $\left\{\begin{array}{cccccc}
		1 & 2 & 3 & 4 & 5 & 6 \\
		\downarrow & \downarrow & \downarrow & \downarrow & \downarrow & \downarrow \\
		4 & 5 & 6 & 7 & 8 & 9 \end{array}\right\}$}
		};
\end{scope}
\begin{scope}[xshift=2.5in,scale=.65]
		\draw (0,0) circle (4);
		\node[invisible] (1) at (180:4) {};
		\node[invisible] (2) at (120:4) {};
		\node[invisible] (3) at (60:4) {};
		\node[invisible] (4) at (0:4) {};
		\node[invisible] (5) at (-60:4) {};
		\node[invisible] (6) at (-120:4) {};
		
		\node[left] at (1) {$\ell_1$};
		\node[above left] at (2) {$\ell_2$};
		\node[above right] at (3) {$\ell_3$};
		\node[right] at (4) {$\ell_4$};
		\node[below right] at (5) {$\ell_5$};
		\node[below left] at (6) {$\ell_6$};
		
		\node[invisible] (a) at (-2.75,.25) {};
		\node[invisible] (b) at (-1.5,2) {};
		\node[invisible] (c) at (1.5,2) {};
		\node[invisible] (d) at (2.75,.25) {};
		\node[invisible] (e) at (-1.5,-1) {};
		\node[invisible] (f) at (-.35,.75) {};
		\node[invisible] (g) at (.35,.75) {};
		\node[invisible] (h) at (1.5,-1) {};
		\node[invisible] (i) at (.5,-2) {};
		\node[invisible] (j) at (-.5,-2) {};

		\PostnikovWhiteBoundaryThree{a}{1}{b}{e};
		\PostnikovBlackBoundaryThree{b}{2}{f}{a};
		\PostnikovWhiteBoundaryThree{c}{3}{d}{g};
		\PostnikovBlackBoundaryThree{d}{4}{h}{c};
		\PostnikovBlack{e}{a}{f}{j};
		\PostnikovWhite{f}{b}{g}{e};
		\PostnikovBlack{g}{c}{h}{f};
		\PostnikovWhite{h}{g}{d}{i};
		\PostnikovBlackBoundaryThree{i}{5}{j}{h};
		\PostnikovWhiteBoundaryThree{j}{6}{e}{i};
		
		\node at (0,-6) {\shortstack{A Postnikov diagram for \\ $\left\{\begin{array}{cccccc}
		1 & 2 & 3 & 4 & 5 & 6 \\
		\downarrow & \downarrow & \downarrow & \downarrow & \downarrow & \downarrow \\
		4 & 6 & 5 & 7 & 8 & 9 \end{array}\right\}$}
		};
\end{scope}
\end{tikzpicture}
\caption{Examples of Postnikov diagrams of type $(3,6)$}
\label{fig: PDexamples}
\end{figure}
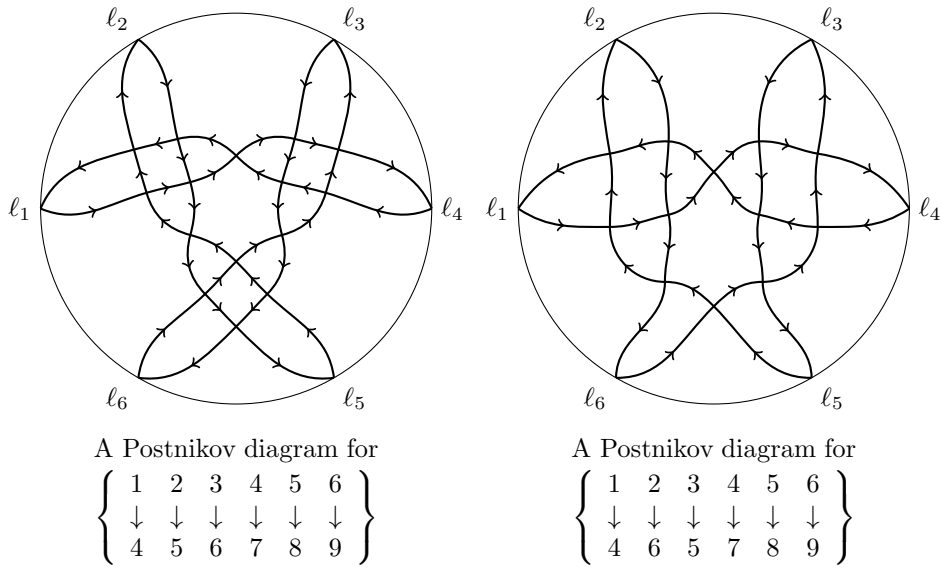

For future use, we note the following consequence of condition (4).

\begin{lemma} \label{nesters dont cross}
If $p < q < r < s$, and $D$ is a Postnikov diagram for a bounded affine permutation $w$, with $w(p) = s$ and $w(q) = r$, then the strands $p \to s$ and $q \to r$ in $w$ cannot cross.
\end{lemma}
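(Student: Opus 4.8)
The plan is to argue by contradiction: suppose the strands $\gamma$ (from $\ell_p$ to $\ell_s$) and $\delta$ (from $\ell_q$ to $\ell_r$) do cross, and derive a contradiction from the defining conditions of a Postnikov diagram, with condition (4) doing the essential work. First note that $\gamma$ and $\delta$ are distinct simple arcs in the disc — distinct since $p\neq q$, and simple by condition (3), since neither is a self-returning boundary loop (as $p<s<p+n$ and $q<r<q+n$) — and that they meet the boundary circle only at their endpoints, so all their crossings lie in the interior of the disc.

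The first step is a parity observation. The simple arc $\delta$ cuts the disc into two regions, and the two arcs into which $\ell_q,\ell_r$ divide the boundary circle lie, respectively, in the closures of these two regions. Since $p<q<r<s$, both $\ell_p$ and $\ell_s$ lie on the same one of these boundary arcs (the one avoiding the indices strictly between $q$ and $r$), hence in the closure of a single region. As $\gamma$ runs from $\ell_p$ to $\ell_s$, and each transverse crossing with $\delta$ switches the region, $\gamma$ crosses $\delta$ an even number of times; so if there is any crossing at all there are at least two, say $X_1,\dots,X_N$ with $N\geq 2$, listed in the order $\gamma$ meets them starting from $\ell_p$. By condition (4), $\delta$ meets these points in the opposite order, so $X_1$ is the crossing nearest to $\ell_r$ along $\delta$.

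Now I build an auxiliary simple arc. Let $A_\gamma$ be the sub-arc of $\gamma$ from $\ell_p$ to $X_1$ and $A_\delta$ the sub-arc of $\delta$ from $X_1$ to $\ell_r$; by the choice of $X_1$ (first along $\gamma$, last along $\delta$) neither $A_\gamma$ nor $A_\delta$ contains any $\gamma$–$\delta$ crossing other than $X_1$, so $\sigma=A_\gamma\cup A_\delta$ is a simple arc from $\ell_p$ to $\ell_r$. Let $\gamma'$ be the remaining part of $\gamma$ (from $X_1$ to $\ell_s$) and $\delta''$ the initial part of $\delta$ (from $\ell_q$ to $X_1$). Using that $\gamma,\delta$ are simple and that $A_\gamma,A_\delta$ carry no crossing other than $X_1$, one checks that $\gamma'$ and $\delta''$ each meet $\sigma$ only at $X_1$. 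The arc $\sigma$ separates the disc into two regions, and since $p<q<r<s$ the points $\ell_s$ and $\ell_q$ lie on opposite boundary arcs determined by $\ell_p,\ell_r$, hence in the closures of the two different regions. Therefore $\gamma'$ (running from $X_1$ to $\ell_s$ without re-meeting $\sigma$) is confined to the closure of one region and $\delta''$ (running from $\ell_q$ to $X_1$) to the closure of the other; since those closures meet only along $\sigma$, we conclude $\gamma'\cap\delta''=\{X_1\}$. But $X_2,\dots,X_N$ all lie on $\gamma'$ (they come after $X_1$ along $\gamma$) and on $\delta''$ (they come before $X_1$ along $\delta$, as $X_1$ is last along $\delta$), so $\gamma'\cap\delta''$ has at least two elements — a contradiction, which forces $N=0$.

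I expect the main obstacle to be the topological bookkeeping rather than any single hard idea: one must verify that the various sub-arcs are genuinely simple and free of spurious intersections (so that "a simple arc through $X_1$ separates the disc, and its two boundary arcs lie in the two regions" may be invoked cleanly), and one must apply the principle "opposite boundary arcs imply opposite regions" correctly twice — once for $\delta$ to obtain parity, once for $\sigma$ to obtain the final contradiction. It is also worth stressing that condition (4) is indispensable: without it, two strands with nested endpoints could cross twice in a co-oriented bigon, and the statement would be false.
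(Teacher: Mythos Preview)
Your proof is correct and follows the same two-step strategy as the paper: a parity argument rules out an odd number of crossings, and condition~(4) handles the even case. The paper dispatches the even case in one line by asserting (with a figure) that an even positive number of opposite-order crossings forces the boundary points into circular order $(p,s,q,r)$; your auxiliary-arc construction with $\sigma$ is simply a more explicit way of extracting the same contradiction.
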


\begin{proof}
We prove the contrapositive: Suppose that $D$ is a Postnikov diagram with strands $p \to s$ and $q \to r$ that do cross. If they cross an odd number of times then $p$ and $s$ are on opposite sides of the chord from $q$ to $r$, contradicting the proposed ordering.
If $p \to s$ and $q \to r$ cross an even number of times, then the boundary points must occur in circular order $(p,s,q,r)$ (see Figure \ref{fig: nocross}), again contradicting the proposed ordering.
\end{proof}

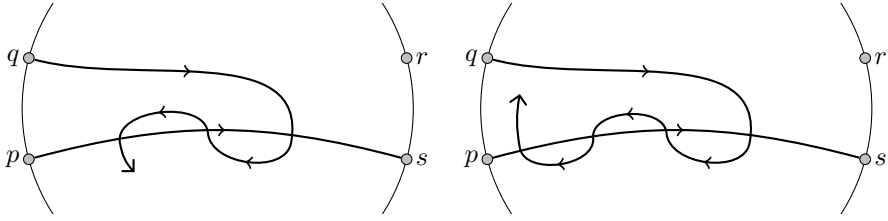
\begin{figure}[h!t]
\begin{tikzpicture}
\begin{scope}[scale=.65]
	\clip (-5,-2.15) rectangle (5,2.15);
	\draw (0,0) circle (4);
	\node[dot] (p) at (195:4) {};
	\node[dot] (q) at (165:4) {};
	\node[dot] (r) at (15:4) {};
	\node[dot] (s) at (-15:4) {};
	
	\node[left] at (p) {$p$};
	\node[left] at (q) {$q$};
	\node[right] at (r) {$r$};
	\node[right] at (s) {$s$};
		
	\draw[oriented,out=15,in=165] (p) to (s);
	\draw[oriented,out=-15,in=80] (q) to (1.5,-.7);
	\draw[oriented,out=-100,in=-85] (1.5,-.7) to (-.2,-.5);
	\draw[oriented,out=95,in=100] (-.2,-.5) to (-2,-.7);
	\draw[thick,-angle 90,out=-80] (-2,-.7) to (-1.7,-1.3);
\end{scope}
\begin{scope}[xshift=2.40in,scale=.65]
	\clip (-5,-2.15) rectangle (5,2.15);
	\draw (0,0) circle (4);
	\node[dot] (p) at (195:4) {};
	\node[dot] (q) at (165:4) {};
	\node[dot] (r) at (15:4) {};
	\node[dot] (s) at (-15:4) {};
	
	\node[left] at (p) {$p$};
	\node[left] at (q) {$q$};
	\node[right] at (r) {$r$};
	\node[right] at (s) {$s$};
		
	\draw[oriented,out=15,in=165] (p) to (s);
	\draw[oriented,out=-15,in=80] (q) to (1.5,-.7);
	\draw[oriented,out=-100,in=-85] (1.5,-.7) to (-.2,-.5);
	\draw[oriented,out=95,in=95] (-.2,-.5) to (-1.7,-.6);
	\draw[oriented,out=-85,in=-80] (-1.7,-.6) to (-3.2,-.8);
	\draw[thick,-angle 90,out=100,in=-100] (-3.2,-.8) to (-3.2,.3);
\end{scope}
\end{tikzpicture}
\caption{Parallel strands cannot cross (Lemma \ref{nesters dont cross})}
\label{fig: nocross}
\end{figure}

The strands of a Postnikov diagram divide the disc into three sorts of regions: \newword{clockwise regions}, where all the strands on their boundary circle clockwise, \newword{counterclockwise regions}, defined analogously, and \newword{alternating regions}, where the adjacent strands alternate directions (see Figure \ref{fig: threeregions}).  All boundary regions are defined to be alternating.  Condition $(2)$ above is equivalent to requiring that every region is either clockwise, counterclockwise or alternating.

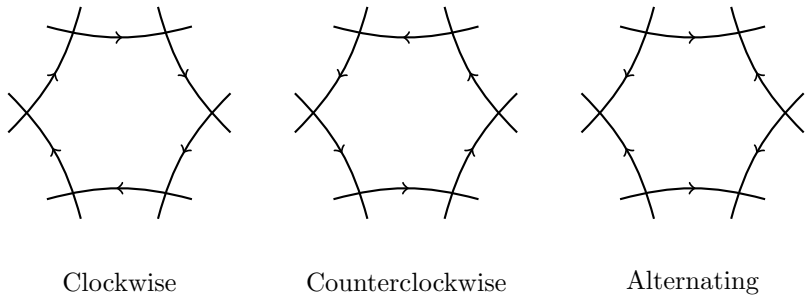
\begin{figure}[h!t]
\begin{tikzpicture}
\begin{scope}[xshift=-1.5in,scale=.75]
	\draw[oriented] (70:2) to [relative, out=-15,in=-165] (-10:2);
	\draw[oriented] (130:2) to [relative, out=-15,in=-165] (50:2);
	\draw[oriented] (190:2) to [relative, out=-15,in=-165] (110:2);
	\draw[oriented] (250:2) to [relative, out=-15,in=-165] (170:2);
	\draw[oriented] (310:2) to [relative, out=-15,in=-165] (230:2);
	\draw[oriented] (370:2) to [relative, out=-15,in=-165] (290:2);
	
	\node at (0,-3) {Clockwise};
\end{scope}
\begin{scope}[scale=.75]
	\draw[oriented] (-10:2) to [relative, out=15,in=165] (70:2);
	\draw[oriented] (50:2) to [relative, out=15,in=165] (130:2);
	\draw[oriented] (110:2) to [relative, out=15,in=165] (190:2);
	\draw[oriented] (170:2) to [relative, out=15,in=165] (250:2);
	\draw[oriented] (230:2) to [relative, out=15,in=165] (310:2);
	\draw[oriented] (290:2) to [relative, out=15,in=165] (370:2);
	
	\node at (0,-3) {Counterclockwise};
\end{scope}
\begin{scope}[xshift=1.5in,scale=.75]
	\draw[oriented] (-10:2) to [relative, out=15,in=165] (70:2);
	\draw[oriented] (130:2) to [relative, out=-15,in=-165] (50:2);
	\draw[oriented] (110:2) to [relative, out=15,in=165] (190:2);
	\draw[oriented] (250:2) to [relative, out=-15,in=-165] (170:2);
	\draw[oriented] (230:2) to [relative, out=15,in=165] (310:2);
	\draw[oriented] (370:2) to [relative, out=-15,in=-165] (290:2);
	
	\node at (0,-3) {Alternating};
\end{scope}
\end{tikzpicture}
\caption{Three types of region in a Postnikov diagram}
\label{fig: threeregions}
\end{figure}

A Postnikov diagram determines an ice quiver as follows.  Form a quiver $\tQ$ whose vertices are the alternating regions of the Postnikov diagram, and where there is an edge between two vertices if the corresponding regions $R_1$ and $R_2$ are separated by a pair of strands which cross on the borders of $R_1$ and $R_2$.
Note that the faces of $\tQ$ are in bijection with the clockwise and counterclockwise regions of the Postnikov diagram; orient $\tQ$ by directing edges clockwise around the clockwise regions of the diagram and counterclockwise around the counterclockwise regions.  Finally, make $\tQ$ an ice quiver by freezing the vertices corresponding to boundary regions.  An example is given in Figure \ref{fig: PDquivers}.

\def\faint{draw=black!50}
\def\salient{draw=black}

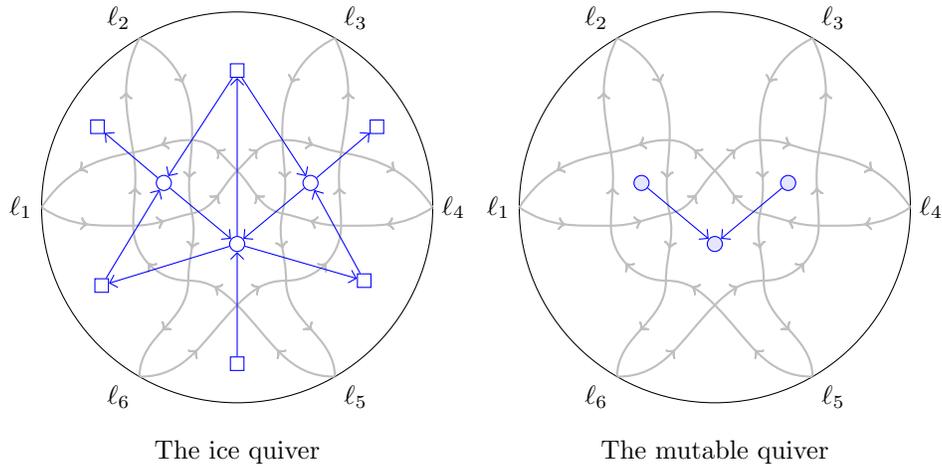
\begin{figure}[h!t]
\begin{tikzpicture}
\begin{scope}[scale=.65]
		\draw (0,0) circle (4);
		\node[invisible] (1) at (180:4) {};
		\node[invisible] (2) at (120:4) {};
		\node[invisible] (3) at (60:4) {};
		\node[invisible] (4) at (0:4) {};
		\node[invisible] (5) at (-60:4) {};
		\node[invisible] (6) at (-120:4) {};
		
		\node[left] at (1) {$\ell_1$};
		\node[above left] at (2) {$\ell_2$};
		\node[above right] at (3) {$\ell_3$};
		\node[right] at (4) {$\ell_4$};
		\node[below right] at (5) {$\ell_5$};
		\node[below left] at (6) {$\ell_6$};
		
		\node[invisible] (a) at (-2.75,.25) {};
		\node[invisible] (b) at (-1.5,2) {};
		\node[invisible] (c) at (1.5,2) {};
		\node[invisible] (d) at (2.75,.25) {};
		\node[invisible] (e) at (-1.5,-1) {};
		\node[invisible] (f) at (-.35,.75) {};
		\node[invisible] (g) at (.35,.75) {};
		\node[invisible] (h) at (1.5,-1) {};
		\node[invisible] (i) at (.5,-2) {};
		\node[invisible] (j) at (-.5,-2) {};

		\PostnikovWhiteBoundaryThree[faded oriented]{a}{1}{b}{e};
		\PostnikovBlackBoundaryThree[faded oriented]{b}{2}{f}{a};
		\PostnikovWhiteBoundaryThree[faded oriented]{c}{3}{d}{g};
		\PostnikovBlackBoundaryThree[faded oriented]{d}{4}{h}{c};
		\PostnikovBlack[faded oriented]{e}{a}{f}{j};
		\PostnikovWhite[faded oriented]{f}{b}{g}{e};
		\PostnikovBlack[faded oriented]{g}{c}{h}{f};
		\PostnikovWhite[faded oriented]{h}{g}{d}{i};
		\PostnikovBlackBoundaryThree[faded oriented]{i}{5}{j}{h};
		\PostnikovWhiteBoundaryThree[faded oriented]{j}{6}{e}{i};
				
		\node[frozen,blue] (F1) at (210:3.2) {};
		\node[frozen,blue] (F2) at (150:3.3) {};
		\node[frozen,blue] (F3) at (90:2.8) {};
		\node[frozen,blue] (F4) at (30:3.3) {};
		\node[frozen,blue] (F5) at (-30:3) {};
		\node[frozen,blue] (F6) at (-90:3.2) {};
		
		\node[mutable,blue] (M1) at (-1.5,.5) {};
		\node[mutable,blue] (M2) at (0,-.75) {};
		\node[mutable,blue] (M3) at (1.5,.5) {};
		
		\draw[blue,-angle 90] (F1) to (M1);
%		\draw[blue,-angle 90] (F1) to (F6);
%		\draw[blue,-angle 90] (F2) to (F1);
		\draw[blue,-angle 90] (F3) to (M1);
		\draw[blue,-angle 90] (F3) to (M3);
%		\draw[blue,-angle 90] (F4) to (F3);
		\draw[blue,-angle 90] (F5) to (M3);
		\draw[blue,-angle 90] (F6) to (M2);
		\draw[blue,-angle 90] (M1) to (F2);
		\draw[blue,-angle 90] (M1) to (M2);
		\draw[blue,-angle 90] (M2) to (F1);
		\draw[blue,-angle 90] (M2) to (F3);
		\draw[blue,-angle 90] (M2) to (F5);
		\draw[blue,-angle 90] (M3) to (M2);
		\draw[blue,-angle 90] (M3) to (F4);
		
		\node at (0,-5) {The ice quiver%\shortstack{The conjectural seed determined \\ by a Postnikov diagram}
		};
\end{scope}
\begin{scope}[xshift=2.5in,scale=.65]
		\draw (0,0) circle (4);
		\node[invisible] (1) at (180:4) {};
		\node[invisible] (2) at (120:4) {};
		\node[invisible] (3) at (60:4) {};
		\node[invisible] (4) at (0:4) {};
		\node[invisible] (5) at (-60:4) {};
		\node[invisible] (6) at (-120:4) {};
		
		\node[left] at (1) {$\ell_1$};
		\node[above left] at (2) {$\ell_2$};
		\node[above right] at (3) {$\ell_3$};
		\node[right] at (4) {$\ell_4$};
		\node[below right] at (5) {$\ell_5$};
		\node[below left] at (6) {$\ell_6$};
		
		\node[invisible] (a) at (-2.75,.25) {};
		\node[invisible] (b) at (-1.5,2) {};
		\node[invisible] (c) at (1.5,2) {};
		\node[invisible] (d) at (2.75,.25) {};
		\node[invisible] (e) at (-1.5,-1) {};
		\node[invisible] (f) at (-.35,.75) {};
		\node[invisible] (g) at (.35,.75) {};
		\node[invisible] (h) at (1.5,-1) {};
		\node[invisible] (i) at (.5,-2) {};
		\node[invisible] (j) at (-.5,-2) {};

		\PostnikovWhiteBoundaryThree[faded oriented]{a}{1}{b}{e};
		\PostnikovBlackBoundaryThree[faded oriented]{b}{2}{f}{a};
		\PostnikovWhiteBoundaryThree[faded oriented]{c}{3}{d}{g};
		\PostnikovBlackBoundaryThree[faded oriented]{d}{4}{h}{c};
		\PostnikovBlack[faded oriented]{e}{a}{f}{j};
		\PostnikovWhite[faded oriented]{f}{b}{g}{e};
		\PostnikovBlack[faded oriented]{g}{c}{h}{f};
		\PostnikovWhite[faded oriented]{h}{g}{d}{i};
		\PostnikovBlackBoundaryThree[faded oriented]{i}{5}{j}{h};
		\PostnikovWhiteBoundaryThree[faded oriented]{j}{6}{e}{i};
		
		\node[mutable,blue, fill=blue!10] (M1) at (-1.5,.5) {};
		\node[mutable,blue, fill=blue!10] (M2) at (0,-.75) {};
		\node[mutable,blue, fill=blue!10] (M3) at (1.5,.5) {};	

		\draw[blue,-angle 90] (M1) to (M2);
		\draw[blue,-angle 90] (M3) to (M2);
		
		\node at (0,-5) {The mutable quiver%\shortstack{The mutable quiver $\Q$ determined \\ by a Postnikov diagram}
		};
\end{scope}
\end{tikzpicture}
\caption{The ice quiver and mutable quiver of a Postnikov diagram}
\label{fig: PDquivers}
\end{figure}

%The expectation is that $\widetilde{\Gamma}$ will be the exchange matrix for the cluster constructed from the Postnikov diagram, with the alternating regions bordering the boundary serving as frozen variables and the other regions serving as mutable variables. 
%We'll write $\Gamma$ for the subgraph of $\widetilde{\Gamma}$ using just the interior regions.

%We have mentioned above many things which are expected to be true.
%It is unambiguously true that, given a Postnikov diagram $D$ for a decorated permutation $w$, the above recipe does construct a quiver $\widetilde{\Gamma}$ with a set of vertices designated as frozen.
%Moreover, w
We have the following theorem of Postnikov:

\begin{theorem}\cite[Corollary 14.2]{Pos}
For any bounded affine permutation $w$, there exists a Postnikov diagram $D$. If $D$ and $D'$ are two Postnikov diagrams for the same $w$, then it is possible to change $D$ to $D'$ by a sequence of square moves (defined in~\cite{Pos}), which have the effect of mutating the associated ice quiver.
\end{theorem}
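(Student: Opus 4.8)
The plan is to treat the statement as two assertions --- existence of a Postnikov diagram for every bounded affine permutation $w$, and connectivity, under square moves, of the set of Postnikov diagrams for a fixed $w$ --- and to reduce each to the combinatorics of reduced words, following Postnikov~\cite{Pos}.

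For \textbf{existence}, I would fix a reduced word for $w$ (viewed as an element of the affine symmetric group; the endpoints, from $\ell_i$ to $\ell_{w(i)}$, are determined by $w$) and draw the associated affine wiring diagram in the disc, letter by letter: the length-zero permutation $i \mapsto i+k$ is assigned the standard diagram of $k$ ``nested'' strands, and each simple transposition in the word introduces one transverse crossing inside a thin strip. One then verifies the four defining conditions directly. Conditions (1) and (3) are immediate from the construction (crossings are generic and introduced one at a time; the self-returning strands at fixed points are tracked by hand). Condition (2), the alternation of left/right crossings along a strand, holds because consecutive letters of a reduced word either commute or form a braid, so they contribute crossings on alternating sides. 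Condition (4) --- that no pair of strands crosses in a ``removable'' pattern --- is exactly equivalent to the chosen word being reduced. Dualizing to the bicolored graph recovers a reduced plabic graph, and since every $w$ admits a reduced word, every bounded affine permutation has a Postnikov diagram.

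For \textbf{uniqueness up to square moves}, the key inputs are (a) Matsumoto's theorem: any two reduced words for $w$ differ by a sequence of commutation and braid relations; and (b) a ``taut diagram'' principle forcing every Postnikov diagram to arise, after square moves, from such a word-diagram. Given (a), I would lift each relation to a local modification of the diagram: a commutation relation is realized by an isotopy that does not change the quiver, while a braid relation is realized by a \emph{square move} on the quadrilateral region it bounds. A direct local computation against the edge/face rule defining $\tQ$ then shows that a square move mutates the ice quiver exactly at the vertex corresponding to that region. For (b), because condition (4) rules out bigons and other reducible configurations, every Postnikov diagram is already ``taut''; one argues by an induction that peels a strand off the boundary, or, equivalently, invokes Postnikov's classification of reduced plabic graphs modulo his moves (M1)--(M3) and checks that among the trivalent representatives carrying a quiver the merge/split moves (M2)--(M3) are either inapplicable or compose to square moves. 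Combining, any two Postnikov diagrams for $w$ are square-move equivalent, with each move a quiver mutation.

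The \textbf{main obstacle} is part (b) above: proving that the square-move graph on Postnikov diagrams for a fixed $w$ is connected. This does not follow formally from Matsumoto's theorem, since a Postnikov diagram records strictly more than a reduced word and the word-to-diagram map is many-to-one and not obviously surjective. The argument must exploit the rigidity supplied by condition (4) --- tautness --- to show that any two taut diagrams with the same strand endpoints are related by the moves; this is the substance of Postnikov's theorem, and the reason we are content to cite it here. Once connectivity is granted, the remaining claims (that a square move is a mutation, and the base case $i \mapsto i+k$) are routine local checks.
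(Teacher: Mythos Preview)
The paper does not prove this theorem at all: it is stated with the attribution \cite[Corollary 14.2]{Pos} and no argument is given. It is used purely as a black box to justify that the cluster algebra $\cA(w)$ is well defined, so there is no ``paper's own proof'' to compare your proposal against.

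As a sketch of Postnikov's argument your outline is in the right spirit but has some loose joints. The existence part via a wiring diagram from a reduced word is reasonable at the level of slogans, but your justification of condition (2) (``consecutive letters \ldots\ contribute crossings on alternating sides'') is not correct as stated: alternation along a strand is a local condition at each crossing and does not follow from adjacency relations in the word; in practice one builds a plabic graph (or a \emph{Le}-diagram / bridge decomposition) and then passes to the alternating strand picture, where (2) is automatic from bipartiteness. For connectivity, your honest acknowledgment that (b) is the real content is right, but the proposed mechanism---lifting Matsumoto's commutation/braid relations to isotopies and square moves---does not match how Postnikov proceeds and is not straightforward to make rigorous: the map from reduced words to Postnikov diagrams is neither injective nor surjective in the way you would need, and a single braid relation does not in general correspond to a single square move. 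Postnikov instead works with reduced plabic graphs and his moves (M1)--(M3), proving connectivity by an inductive/normal-form argument; the passage to ``square moves only'' then uses that contraction/uncontraction moves can be absorbed when one restricts to bipartite trivalent representatives. If you want a self-contained write-up, that is the framework to follow; otherwise, citing \cite{Pos} as the paper does is the appropriate course.
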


It therefore makes sense, for any bounded affine permutation $w$, to define $\cA(w)$ to be the cluster algebra defined by the ice quiver of any Postnikov diagram for $w$.
 
 Our main result is
 \begin{theorem} \label{MainTheorem}
 For any bounded affine permutation $w$, the cluster algebra $\cA(w)$ is Louise.
 \end{theorem}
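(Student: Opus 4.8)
The plan is to reduce Theorem~\ref{MainTheorem} to a purely combinatorial statement about the mutable quivers $\Q$ of Postnikov diagrams and then prove it by induction, the essential input being a ``boundary surgery'' on Postnikov diagrams. Since the Louise property depends only on the exchange type and is preserved by mutation, and since by Postnikov's theorem any two Postnikov diagrams for the same $w$ are related by square moves (hence by mutations of the associated ice quiver), it suffices to show that for \emph{some} Postnikov diagram $D$ of $w$ the mutable quiver $\Q=\Q_D$ is Louise; moreover, in running the induction we are free at any stage to first replace $D$ by a square-move-equivalent diagram. We also record two routine lemmas, each provable by the same arrow-cutting argument that defines the class: any acyclic quiver is Louise, and a disjoint union of Louise quivers is Louise.

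The induction is a reverse induction on $\ell(w)$ for fixed $k,n$, with a secondary induction on $n$ to absorb degenerate cases. \textbf{Base cases.} If $\ell(w)=k(n-k)$ then $w\equiv\mathrm{id}\pmod n$, the cell $\Pio(w)$ is a point, $D$ has no mutable region, and $\Q$ is edgeless, hence Louise. If $w(i)=i$ or $w(i)=i+n$ for some $i$ (a boundary loop in $D$), then $D$ is, up to that loop, a Postnikov diagram on a disc with $n-1$ marked points, so $\Q$ is the disjoint union of a smaller-$n$ Postnikov quiver with an edgeless quiver, and is Louise by the routine lemmas and the secondary inductive hypothesis. \textbf{Inductive step.} Assume $w$ is loopless and $\ell(w)<k(n-k)$, so $\Q$ has at least one mutable vertex. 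The heart of the argument is a \emph{surgery lemma}: after suitable square moves, $D$ contains a mutable region $s$ that is a \emph{source} of $\Q$ and is adjacent to the boundary, together with an arrow $\alpha\colon s\to t$, such that freezing $s$ (i.e.\ deleting $s$ from $\Q$) produces the quiver of a Postnikov diagram for a bounded affine permutation $w_1$ with $\ell(w_1)=\ell(w)+1$ --- obtained from $D$ by ``uncrossing'' the pair of strands at the boundary crossing that bounds $s$ --- and such that $\Q[t^{-1}]$ and $\Q[s^{-1},t^{-1}]$ are, similarly, disjoint unions of edgeless quivers with Postnikov quivers of bounded affine permutations of length $>\ell(w)$, or of strictly smaller $n$. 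Granting the surgery lemma: because $s$ is a source, $\alpha$ lies in no bi-infinite directed path, so part~$(3)$ of Definition~\ref{defn: Louise} (in the practice version where $s$ is a source) applies; each of $\Q[s^{-1}]$, $\Q[t^{-1}]$, $\Q[s^{-1},t^{-1}]$ is Louise by the inductive hypothesis; hence $\Q$ is Louise, and $\cA(w)$ is Louise.

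The main obstacle is the surgery lemma, where two requirements pull against each other. One must (a) locate, after an appropriate sequence of square moves, a boundary-adjacent mutable region that is genuinely a \emph{source} of the mutable quiver, and pick the target $t$ of one of its arrows with care --- ruling out configurations in which every boundary-adjacent mutable region is a sink, and controlling which square moves to apply, using Lemma~\ref{nesters dont cross} and the local structure of strands near one boundary arc; and (b) identify the three frozen-vertex deletions with quivers already covered by the induction. Step (b) is delicate because freezing $t$ in addition to $s$ may disconnect the diagram or leave regions that are no longer honest faces of a Postnikov diagram, so it requires the correct bookkeeping: exhibiting each cut as a disjoint union of an edgeless quiver (accounting for the deleted frozen vertices) with a Postnikov quiver of a positroid cell whose bounded affine permutation has strictly larger length, or strictly smaller $n$, and checking that the induction parameter strictly decreases in every case. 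Everything else --- mutation- and exchange-type-invariance of the Louise property, the reduction via Postnikov's square-move theorem, and the final assembly through Theorem~\ref{Banff} and Definition~\ref{defn: Louise} --- is formal.
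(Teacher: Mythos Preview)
Your overall architecture---reverse induction on $\ell(w)$ with a secondary reduction on $n$, using the source version of part~(3) of Definition~\ref{defn: Louise}---matches the paper's. But the ``surgery lemma'' you assert is false as stated, and the paper contains an explicit counterexample. Take the bounded affine permutation $w$ of type $(3,9)$ in Figure~\ref{fig: rigid}: its \emph{only} Postnikov diagram (up to trivial modifications) has every mutable region hexagonal, so \emph{no} square moves are available, and the mutable quiver is a directed $3$-cycle with no source and no sink. Thus you cannot, by square moves alone, reach a Postnikov diagram for $w$ with a mutable source region. There is simply no reason the mutation class of Postnikov diagrams for a fixed $w$ should contain such a diagram; this is exactly the obstruction you flagged in (a), and it is genuine.

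The paper's resolution is to add a third induction parameter, the \emph{shortest throw} $t=\min_i(w(i)-i)$, together with a ``sideways'' move (Corollary~\ref{sideways}): if $i$ achieves the shortest throw, $2\le t\le n-2$, and $w^{-1}(i+1)<w^{-1}(i)$, then $s_iws_i$ has the \emph{same exchange type} as $w$ (so proving $\cA(s_iws_i)$ Louise suffices), the same length, but strictly smaller shortest throw. Crucially $s_iws_i$ is a \emph{different} bounded affine permutation, hence has a different stock of Postnikov diagrams, and the obstruction dissolves after enough sideways moves. In the complementary case $w^{-1}(i+1)>w^{-1}(i)$, Lemma~\ref{covers} supplies exactly the configuration you wanted: a source $x$ with a single mutable neighbour $y$, such that $\Q[x^{-1}]$, $\Q[y^{-1}]$, $\Q[x^{-1},y^{-1}]$ are (up to an isolated vertex) Postnikov quivers for $s_iw$ and $s_iws_i$, of lengths $\ell+1$ and $\ell+2$ (Corollary~\ref{downwards}). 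So your framework is sound, but the induction needs the shortest-throw parameter and the sideways move through a different permutation of the same exchange type; the surgery cannot be carried out within the square-move class of $w$ alone.
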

 
We now describe the expected connection between $\cA(w)$ and $\Pio(w)$.  A strand in a Postnikov diagram will divide the disc into two pieces, called its `left' and `right'.  
We will label each alternating region with the indices of the strands to whose right it lies.
This has the effect of labeling each alternating region with a $k$-element subset of $\{ 1,2, \ldots, n \}$, and each alternating region receives a distinct label (see Figure \ref{fig: PDPluckers}).

The corresponding Pl\"ucker coordinates are expected to form a cluster on $\Pio(w)$.

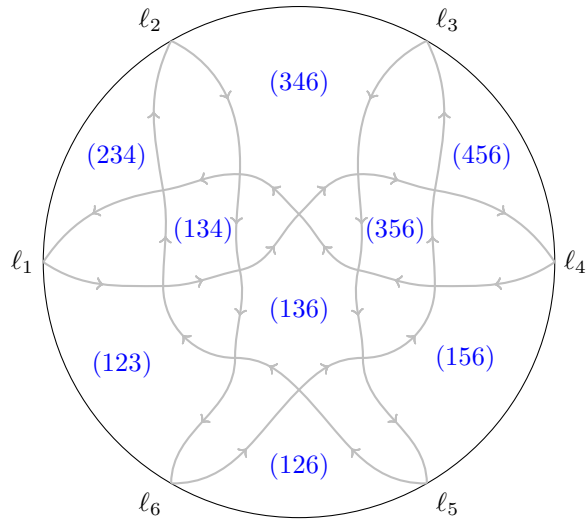
\begin{figure}[h!t]
\begin{tikzpicture}
\begin{scope}[scale=.85]
		\draw (0,0) circle (4);
		\node[invisible] (1) at (180:4) {};
		\node[invisible] (2) at (120:4) {};
		\node[invisible] (3) at (60:4) {};
		\node[invisible] (4) at (0:4) {};
		\node[invisible] (5) at (-60:4) {};
		\node[invisible] (6) at (-120:4) {};
		
		\node[left] at (1) {$\ell_1$};
		\node[above left] at (2) {$\ell_2$};
		\node[above right] at (3) {$\ell_3$};
		\node[right] at (4) {$\ell_4$};
		\node[below right] at (5) {$\ell_5$};
		\node[below left] at (6) {$\ell_6$};
		
		\node[invisible] (a) at (-2.75,.25) {};
		\node[invisible] (b) at (-1.5,2) {};
		\node[invisible] (c) at (1.5,2) {};
		\node[invisible] (d) at (2.75,.25) {};
		\node[invisible] (e) at (-1.5,-1) {};
		\node[invisible] (f) at (-.35,.75) {};
		\node[invisible] (g) at (.35,.75) {};
		\node[invisible] (h) at (1.5,-1) {};
		\node[invisible] (i) at (.5,-2) {};
		\node[invisible] (j) at (-.5,-2) {};

		\PostnikovWhiteBoundaryThree[faded oriented]{a}{1}{b}{e};
		\PostnikovBlackBoundaryThree[faded oriented]{b}{2}{f}{a};
		\PostnikovWhiteBoundaryThree[faded oriented]{c}{3}{d}{g};
		\PostnikovBlackBoundaryThree[faded oriented]{d}{4}{h}{c};
		\PostnikovBlack[faded oriented]{e}{a}{f}{j};
		\PostnikovWhite[faded oriented]{f}{b}{g}{e};
		\PostnikovBlack[faded oriented]{g}{c}{h}{f};
		\PostnikovWhite[faded oriented]{h}{g}{d}{i};
		\PostnikovBlackBoundaryThree[faded oriented]{i}{5}{j}{h};
		\PostnikovWhiteBoundaryThree[faded oriented]{j}{6}{e}{i};
		
		\node[blue] at (210:3.2) {$(123$)};
		\node[blue] at (150:3.3) {$(234$)};
		\node[blue] at (90:2.8) {$(346$)};
		\node[blue] at (30:3.3) {$(456$)};
		\node[blue] at (-30:3) {$(156$)};
		\node[blue] at (-90:3.2) {$(126$)};
		
		\node[blue] at (-1.5,.5) {$(134)$};
		\node[blue] at (0,-.75) {$(136)$};
		\node[blue] at (1.5,.5) {$(356)$};
		
%		\node at (0,-6) {\shortstack{A Postnikov diagram for \\ $\left\{\begin{array}{cccccc}
%		1 & 2 & 3 & 4 & 5 & 6 \\
%		\downarrow & \downarrow & \downarrow & \downarrow & \downarrow & \downarrow \\
%		4 & 6 & 5 & 7 & 8 & 9 \end{array}\right\}$}
%		};
\end{scope}
\end{tikzpicture}
\caption{The Pl\"ucker coordinates determined by a Postnikov diagram}
\label{fig: PDPluckers}
\end{figure}

Let $\widetilde{\mathcal{O}}(\Pio(w))$ be the homogeneous coordinate ring of $\Pio(w)$ for the Pl\"ucker embeding of the Grassmannian.  The following conjecture has circulated since Postnikov introduced his diagrams.

\begin{conjecture}\label{conj}
There is an isomorphism of rings
\[ \cA(w) \stackrel{\sim}{\longrightarrow} \widetilde{\mathcal{O}}(\Pio(w))\]
such that for any Postnikov diagram $D$ for $w$, the cluster variable $x_F$ in $\cA(w)$ corresponding to a face $F$ is sent to the Pl\"ucker coordinate $\Delta_I$, where $I$ is the label of the face $F$.
\end{conjecture}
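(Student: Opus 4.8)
The plan is to construct the map explicitly in the only reasonable way, verify it is a well-defined injection, and then close the gap between ``injection'' and ``isomorphism'' by combining the local acyclicity of Theorem~\ref{MainTheorem} with a codimension-two argument. First I would fix a Postnikov diagram $D$ for $w$ and define the homomorphism on generators: send each cluster variable $x_F$ to the Pl\"ucker coordinate $\Delta_{I(F)}$, where $I(F)$ is the $k$-subset labelling the face $F$, and each frozen variable to the corresponding boundary Pl\"ucker coordinate (with its inverse adjoined, matching the definition of $\cA(w)$). The substance of ``well defined'' is that this assignment intertwines mutation with the Pl\"ucker relations: a square move on $D$ alters the label of the mutated face exactly according to a three-term Pl\"ucker relation, and one must check that this relation agrees with the exchange relation read off from the quiver $\tQ$. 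I would verify this locally and combinatorially on the alternating strand diagram, using Lemma~\ref{nesters dont cross} to constrain which labels can border a given face, so that each exchange reduces to a single three-term identity on a $G(2,4)$ sub-configuration.

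Injectivity is then cheap. Both $\cA(w)$ and $\widetilde{\mathcal{O}}(\Pio(w))$ are integral domains, so it suffices to show the map is dominant on the open torus cut out by one cluster. For this I would invoke Postnikov's boundary measurement parametrization (or a twist of it), which realizes $\Pio(w)$ as birational to the algebraic torus with coordinates the face Pl\"ucker coordinates. The same parametrization shows simultaneously that the $\Delta_{I(F)}$ are algebraically independent, and hence genuinely form a cluster of the expected size $k(n-k)-\ell(w)+1$, and that the image of the map is dense.

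The crux is surjectivity: a priori the image is only the subalgebra generated by the face Pl\"ucker coordinates and the inverse boundary coordinates, which could be strictly smaller than $\widetilde{\mathcal{O}}(\Pio(w))$. Here Theorem~\ref{MainTheorem} does the decisive work. Since $\cA(w)$ is Louise it is locally acyclic, so by Theorem~\ref{thm:LACA} it equals its own upper cluster algebra $\overline{\cA(w)}$. It therefore suffices to identify $\overline{\cA(w)}$ with $\widetilde{\mathcal{O}}(\Pio(w))$, and upper cluster algebras are exactly the objects one can pin down by the Starfish lemma of \cite{BFZ05}: a normal domain sandwiched between $\cA(w)$ and the intersection of the Laurent rings of all clusters, and agreeing with $\overline{\cA(w)}$ in codimension one, must equal $\overline{\cA(w)}$. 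So I would check the three hypotheses for $R=\widetilde{\mathcal{O}}(\Pio(w))$: normality of the open positroid variety (known from \cite{KLS13}); that every Pl\"ucker coordinate lies in the Laurent ring of each cluster (the Laurent phenomenon on one side, regularity on the other); and, the heart of the matter, the codimension-one comparison, namely that the vanishing loci of the frozen Pl\"ucker coordinates account for all the boundary divisors of $\Pio(w)$ and that mutation at a fixed face realizes the coordinate ring of the corresponding codimension-one localization.

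The main obstacle is precisely this codimension-one and codimension-two bookkeeping: one must show that no regular function on $\Pio(w)$ is lost and that no spurious pole is introduced. I expect the cleanest route to be reverse induction on $\ell(w)$, mirroring the induction behind Theorem~\ref{MainTheorem}: the Louise decomposition $\Spec(\cA(w)) = \Spec(\cA(w)[x_s^{-1}]) \cup \Spec(\cA(w)[x_t^{-1}])$ with intersection $\Spec(\cA(w)[x_s^{-1},x_t^{-1}])$ should mirror an open cover of $\Pio(w)$ by loci where boundary or mutated Pl\"ucker coordinates are nonvanishing, each piece being the coordinate ring of a positroid variety for a shorter permutation. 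Matching these two Mayer--Vietoris decompositions, i.e. verifying that freezing the vertices $s$ and $t$ corresponds geometrically to passing to the expected sub-positroid loci, is the step I expect to be genuinely hard, and it is exactly the point this paper deliberately sidesteps by proving only the purely cluster-theoretic statement that $\cA(w)$ is Louise.
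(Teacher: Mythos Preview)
The statement you are asked to prove is labeled a \emph{Conjecture} in the paper, and the paper offers no proof of it. The introduction says explicitly that the authors ``avoid this issue'' and establish only the cluster-theoretic Theorem~\ref{MainTheorem}; the identification of $\cA(w)$ with $\widetilde{\mathcal{O}}(\Pio(w))$ is left open except in the single case $w=\pi(k,n)$, which is attributed to Scott~\cite{Sco06}. There is therefore no proof in the paper against which to compare your proposal, and your own final sentence already recognizes this.

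Your outline is a reasonable plan of attack, and you correctly isolate the genuine obstruction in your last paragraph: matching the Louise cover of $\Spec(\cA(w))$ with a geometric open cover of $\Pio(w)$ by smaller positroid loci is exactly what is missing, and nothing in this paper supplies it. Two smaller cautions on the earlier steps. First, the Starfish-type criterion you invoke is not a bare ``agree in codimension one'' statement; in the form one can actually apply it requires each cluster variable to be a prime element of the target ring and adjacent cluster variables to be coprime there, and establishing primality of the face Pl\"ucker coordinates in $\widetilde{\mathcal{O}}(\Pio(w))$ is itself a substantial geometric input about positroid varieties, not a formality. Second, not every mutation out of a Postnikov seed is a square move, so arbitrary exchange relations are not three-term Pl\"ucker identities on a $G(2,4)$ sub-configuration; it is cleaner to define the map on a single seed and let the Laurent phenomenon transport it, but then one must separately argue that every cluster variable lands in $\widetilde{\mathcal{O}}(\Pio(w))$ rather than only in its fraction field. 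None of these points are fatal to the strategy, but together they explain why the paper treats the statement as a conjecture rather than a corollary of Theorem~\ref{MainTheorem}.
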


\noindent It would follow that the face labels of a Postnikov diagram give a cluster of Pl\"ucker coordinates.

This conjecture generalizes the known cluster structure on the Grassmannian, as follows.  Let $\pi(k,n)$ be the map $x \mapsto x+k$, considered as a bounded affine permutation of type $(k,n)$. Scott~\cite{Sco06} showed that $\cA(\pi(k,n))$, without the frozen variables inverted, is the homogenous coordinate ring of the Grassmannian. Using the definition of cluster algebra in this paper, where frozen variables are inverted, this implies a special case of the conjecture, that $\cA(\pi(k,n))$ is $\widetilde{\mathcal{O}}(\Pio(\pi(k,n)))$. Thus, Theorem \ref{MainTheorem} implies that $\Pio(\pi(k,n))$ is locally acyclic.

\begin{remark}
Instead of labeling a strand with the index of its target, we could have labeled it with the index of its source.  If we still label each alternating region with the set of strands of which it is on the left side, we obtain a different collection of face labels and corresponding Pl\"ucker coordinates.  There is then a \emph{source-labeled} version of Conjecture \ref{conj}, which says there is a different isomorphism 
\[ \cA(w) \stackrel{\sim}{\longrightarrow} \widetilde{\mathcal{O}}(\Pio(w))\]
which sends the cluster variable of a face $F$ to the Pl\"ucker coordinate corresponding to this alternate labeling.  %We note Conjecture \ref{conj} for $w$ implies the source-labeled version for $w_0ww_0^{-1}$, where $w_0(an+b) =(a+1)n-b+1$ for $a\in \ZZ$ and $b\in [n]$.

The conjectures can be regarded as giving $\widetilde{\mathcal{O}}(\Pio(w))$ two potentially distinct cluster structures.  When $w$ is $\pi(k,n)$, these cluster structures coincide; however, there are other bounded affine permutation for which these two cluster structures have mutation-inequivalent clusters.  This includes some double Bruhat cells; in the case of $GL_4^{e,(14)}$, the target-labeling convention gives the cluster structure from \cite{BFZ05}, while the source-labeling convention gives the image of this cluster structure under the adjugate map.  A cluster for either cluster structure defines an open algebraic torus inside $\Pio(w)$.   Computations suggest that sets of algebraic tori in $\Pio(w)$ coming from either cluster structure coincide, so perhaps they are the `same' cluster structure in an appropriately generalized sense.

This source-labeled cluster algebra appears to be closer to the one recently constructed by Leclerc~\cite{Lec14}, but the details are not yet clear.
\end{remark}

\section{Proof of the main result}\label{Main}

We will now prove the main theorem, that $\cA(w)$ is Louise.  We will use the following lemmas to reduce from one permutation to another. We write $s_i$ for the affine permutation 
\[ s_i(j) = \begin{cases} j+1 & j \equiv i \bmod n \\ j-1 & j \equiv i+1 \bmod n \\ j & \mbox{otherwise} \end{cases}. \]

\begin{lemma} \label{fixed point}
Let $w$ be a bounded affine permutation of type $(k,n)$ with $w(i) = i$ (respectively $w(i) = i+n$). Define a permutation $w'$ of type $(k,n-1)$ (respectively $(k-1,n-1)$) as follows:  Define $Y$ to be the set of integers which are \emph{not} $\equiv i \bmod n$. Then $w$ maps $Y$ bijectively to itself; choose an order preserving bijection $\alpha : \ZZ \to Y$, and define $w' = \alpha^{-1} \circ w \circ \alpha$.

Then $w$ and $w'$ have the same exchange type.
\end{lemma}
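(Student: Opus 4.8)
The plan is to pass to the level of Postnikov diagrams and reduce the statement to an \emph{equality} of ice quivers. By the theorem of Postnikov recalled above (\cite[Corollary 14.2]{Pos}), every bounded affine permutation admits a Postnikov diagram, and any two Postnikov diagrams for the same permutation differ by square moves, i.e.\ by mutations of the associated ice quiver; consequently the exchange type of $\cA(w)$ is the mutation class of the mutable part of the ice quiver of \emph{any} Postnikov diagram for $w$, and likewise for $w'$. So it is enough to produce one Postnikov diagram $D$ for $w$ and one Postnikov diagram $D'$ for $w'$ whose ice quivers coincide.

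I would carry out the construction in the case $w(i) = i$ and note that the case $w(i) = i+n$ is the same after two cosmetic changes --- the loop below circles clockwise instead of counterclockwise, and $w'$ has type $(k-1,n-1)$ --- neither of which touches the region-and-crossing combinatorics that determines the quiver. Normalizing $1 \le i \le n$, start from any Postnikov diagram $D'$ for $w'$ and build $D$ by inserting a new boundary marked point $\ell_i$ on the arc of $D'$ between the two consecutive marked points that correspond under $\alpha$ to $\ell_{i-1}$ and $\ell_{i+1}$, drawing a small counterclockwise loop from $\ell_i$ to itself inside a neighborhood that meets no strand of $D'$, and relabeling the remaining marked points by $\alpha$. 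Since $\alpha$ is order-preserving with $w' = \alpha^{-1} w \alpha$, the strand of $D'$ from $\ell'_a$ to $\ell'_{w'(a)}$ becomes the strand of $D$ from $\ell_{\alpha(a)}$ to $\ell_{\alpha(w'(a))} = \ell_{w(\alpha(a))}$, so the strands of $D$ are exactly $\ell_j \to \ell_{w(j)}$ for $j \not\equiv i \bmod n$ together with the new loop $\ell_i \to \ell_{w(i)} = \ell_i$.

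Next I would verify that $D$ is a Postnikov diagram for $w$: conditions (1)--(4) hold for the inherited strands because they held in $D'$ and the insertion creates no new crossings, and they hold for the loop because it crosses nothing and circles counterclockwise exactly as condition (3) requires when $w(i) = i$. Then I would compare ice quivers. The region enclosed by the new loop has a single strand on its boundary, circling counterclockwise, so it is a counterclockwise region --- a \emph{face} of the quiver, not a vertex --- and it produces no arc, because an arc of the quiver comes from a \emph{pair} of strands crossing on the common border of two regions. Since no crossings were created or destroyed, the remaining alternating regions and all arcs between them are exactly those of $D'$; the only other effect of the insertion is to enlarge one boundary region of $D'$ by absorbing the loop's interior, and it remains a single alternating boundary region. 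Hence the ice quiver of $D$ equals that of $D'$, and the lemma follows.

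The step I expect to be delicate is the quiver comparison in a neighborhood of $\ell_i$: one must be certain that inserting the loop neither splits the ambient boundary region in two nor creates a new mutable vertex. The clean way through is the observation above --- the loop's interior meets only one strand, and quiver vertices are alternating regions while quiver arcs come from pairs of crossing strands --- which rules out both possibilities at once; the remaining checks are bookkeeping against the definitions of Postnikov diagram and of its ice quiver. A second point to state carefully, though not hard, is the legitimacy of comparing exchange types via a single diagram on each side, which is precisely the square-move half of Postnikov's theorem.
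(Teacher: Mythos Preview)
Your approach is essentially the same as the paper's, which simply notes that a Postnikov diagram for $w$ is obtained from one for $w'$ by inserting a non-crossing self-loop at $\ell_i$, leaving the quiver unchanged; you have merely supplied the details that the paper omits. One minor phrasing issue: the ambient boundary region of $D'$ is not \emph{enlarged} by ``absorbing the loop's interior''---rather, the loop's interior is carved out of it as a new oriented (hence non-alternating) region---but your conclusion that a single alternating boundary region remains, and hence that the ice quivers coincide, is correct.
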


\begin{proof}
An alternating strand diagram for $w$ is obtained from one for $w'$ by adding a self loop at $i$, not crossing any of the other strands. This clearly does not alter the quiver.
\end{proof}

\begin{lemma} \label{short arc}
Let $w$ be a bounded affine permutation with $w(i)=i+1$ or $w(i+1)=i+n$. If $n\geq2$, then $s_i w$ and $w s_i$ are also bounded affine permutations, and $\cA(s_i w)$, $\cA(w s_i)$ and $\cA(w)$ all have the same exchange type.
% Furthermore, $\ell(s_i w) = \ell(w s_i) = \ell(w)+1$.
\end{lemma}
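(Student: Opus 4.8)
The plan is to produce, for each of $w$, $s_iw$, and $ws_i$, a Postnikov diagram in which the three \emph{mutable} quivers coincide on the nose. Since the exchange type of a cluster algebra is nothing but its mutable quiver considered up to mutation, this will prove the last assertion; it will also subsume the claim that $s_iw$ and $ws_i$ are bounded affine permutations of type $(k,n)$, though the latter is anyway a quick hand check (when $w(i)=i+1$ one has $(s_iw)(i)=i$ and $(ws_i)(i+1)=i+1$, the bound $j\le\cdot\le j+n$ can only be threatened at the residues $i,i+1$ where the hypothesis excludes the offending cases, and $\sum_{j=1}^n(w(j)-j)$ is unchanged). I would treat the hypothesis $w(i)=i+1$ first; note that $\ell_i$ and $\ell_{i+1}$ are joined by a strand of a Postnikov diagram for $w$ precisely when $w(i)=i+1$ or $w(i+1)=i+n$, so the two hypotheses are the two cases of a single picture, and I will reduce the second to the first at the end.

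The geometric input is the following structural fact: \emph{in any Postnikov diagram $D$ for a $w$ with $w(i)=i+1$, the strand $\sigma$ running from $\ell_i$ to $\ell_{i+1}$ crosses no other strand.} If it did, then $\sigma$ and the (marked-point-free) boundary arc from $\ell_i$ to $\ell_{i+1}$ would bound a disc met by some other strand, and an innermost-disc argument would extract from it an empty bigon, i.e.\ two strands $\gamma,\delta$ crossing exactly twice and bounding a disc with nothing inside; but then $\gamma$ and $\delta$ would traverse their two common crossings in the \emph{same} cyclic order along each strand, contradicting condition~(4). I expect the fussiest point of the whole argument to be making this innermost-disc extraction fully rigorous — in particular, checking that other strands threading through the region can be peeled off — although this is a standard kind of bookkeeping.

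Granting this, fix such a $D$; let $A$ be the strand entering $\ell_i$ and $\mu$ the strand leaving $\ell_{i+1}$. These are distinct from $\sigma$, and distinct from each other unless $w(i+1)=i+n$ (the degenerate overlap, handled below). I build $D'$ from $D$ by excising the ends of $A$ and $\sigma$ at $\ell_i$, splicing them into a single strand $A'$ from the source of $A$ to $\ell_{i+1}$ that hugs the boundary exactly where $\sigma$ did, and inserting a tiny counterclockwise self-loop at $\ell_i$ in the sliver between $\ell_i$ and $A'$. Then $D'$ is a Postnikov diagram for $s_iw$: the strand now landing at $\ell_{i+1}$ departs from $w^{-1}(i)=(s_iw)^{-1}(i+1)$, and the self-loop at the fixed point $i$ of $s_iw$ circles counterclockwise as condition~(3) demands. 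Because $\sigma$ met nothing and the surgery lives in a neighborhood of $\ell_i$ and the arc $[\ell_i,\ell_{i+1}]$, it neither creates nor destroys a crossing and neither merges nor splits an interior alternating region; $D$ and $D'$ differ only by a non-crossing self-loop, which leaves the quiver unchanged, exactly as in the proof of Lemma~\ref{fixed point}. The mirror-image surgery at $\ell_{i+1}$ — splice $\sigma$ onto $\mu$, insert a counterclockwise self-loop at the fixed point $i+1$ of $ws_i$ — gives a diagram $D''$ for $ws_i$ with the same quiver. Hence $w$, $s_iw$, $ws_i$ have a common mutable quiver.

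Finally I reduce the case $w(i+1)=i+n$ to the previous one. Set $u=w^{-1}$ and let $\hat u$ be the bounded affine permutation of type $(n-k,n)$ with $\hat u(j)=u(j)+n$; then $\hat u(i)=i+1$. Replacing a permutation $v$ by $\widehat{v^{-1}}$ reverses the orientation of every strand of a Postnikov diagram, hence preserves the vertex and edge sets of the associated quiver but reverses all its arrows (clockwise and counterclockwise regions swap roles); so the mutable quivers of $\cA(v)$ and $\cA(\widehat{v^{-1}})$ are opposite. Since $s_i$ is an involution, $\widehat{(s_iw)^{-1}}=\hat u\,s_i$ and $\widehat{(ws_i)^{-1}}=s_i\,\hat u$, so the already-proved case, applied to $\hat u$, says $\cA(\hat u)$, $\cA(s_i\hat u)$, $\cA(\hat u s_i)$ share a mutable quiver; therefore $\cA(w)$, $\cA(s_iw)$, $\cA(ws_i)$ share the opposite mutable quiver, and in particular the same exchange type. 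This also yields that $s_iw,ws_i$ are bounded affine permutations in the second case. The degenerate overlap $w(i)=i+1$ and $w(i+1)=i+n$ — where $w$ restricts to the transposition $(i\ i{+}1)$ modulo $n$, so that the diagram for $s_iw$ carries self-loops at both $\ell_i$ and $\ell_{i+1}$ — is checked by hand via the same surgery, with the two boundary-hugging strands now closing up directly into those two self-loops; this is where the hypothesis $n\ge2$ enters.
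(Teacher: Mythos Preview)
Your surgical construction --- splice the incoming strand at $\ell_i$ onto $\sigma$ and insert a counterclockwise self-loop at $\ell_i$ to obtain a diagram for $s_iw$, and the mirror operation at $\ell_{i+1}$ for $ws_i$ --- is exactly what the paper does (its Figure~\ref{fig: shortstrand}), and the conclusion that the mutable quivers coincide is correct.  The reduction of the case $w(i+1)=i+n$ via the duality $v\mapsto\widehat{v^{-1}}$ is a legitimate alternative to the paper's ``essentially identical''.

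The gap is in your justification that $\sigma$ crosses nothing.  You assert that an innermost empty bigon forces its two bounding strands to traverse their two crossings in the \emph{same} order, contradicting condition~(4).  In fact condition~(4) forces the opposite: if an arc of some strand $\gamma$ lies inside the region $R$ between $\sigma$ and the short boundary arc, with endpoints $p,q$ on $\sigma$ (say $p$ before $q$ along $\sigma$), then (4) applied to $\sigma$ and $\gamma$ says $q$ precedes $p$ along $\gamma$.  So the innermost bigon you extract is consistently oriented and satisfies~(4); one checks it satisfies~(2) as well.  Conditions (1)--(4) alone do not obviously exclude such a bigon; ruling it out requires further input (the boundary conventions at $\ell_i,\ell_{i+1}$, or translation to the plabic-graph picture of reducedness).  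Your hedge that this is ``the fussiest point'' is well-placed, but the specific contradiction you name does not occur.

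The paper avoids this issue entirely: rather than proving the non-crossing property for \emph{every} Postnikov diagram, it simply chooses \emph{one} diagram $D$ for $w$ in which the strand $\ell_i\to\ell_{i+1}$ crosses nothing, and performs the surgery there.  The existence of such a $D$ is standard from Postnikov's explicit constructions.  Replacing your second paragraph with that one-line appeal to existence gives a complete proof.
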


\begin{proof}
We give the proof in the case that $w(i)=i+1$; the case of $w(i+1)=i+n$ is essentially identical.
%We need to check the inequalities $j \leq (s_i w)(j) \leq j+n$ and $j \leq (w s_i)(j) \leq j+n$. 
We consider the action of $s_iw$.%Since $w$ is a bijection, $w(i+1)\neq i+1$ and so $w(i+1)\geq i+2$. 
\[ s_iw(j) = \left\{ \begin{array}{ll}
w(j)+1 & \text{if }w(j)\equiv i\bmod n \\
w(j)-1=j & \text{if }w(j)\equiv i+1\bmod n,\ \text{equivalently},\ \text{if}\ j \equiv i \bmod n \\
w(j) & \text{otherwise}
\end{array}\right\} \]
We observe that $j\leq s_iw(j)\leq j+n$, unless that $w(j)\equiv i\bmod n$ and $w(j)=j+n$.  In that case, $j\equiv i\bmod n$, and so $w(j)=j+1$. So $j+1=j+n$ and $n=1$, contrary to our assumption $n \geq 2$.

%Since $w(i)=i+1$ and $w$ is $n$-periodic, this would imply that $w(j)=j+1$, and so $n=1$.

We next consider the action of $ws_i$.
\[ ws_i(j) = \left\{ \begin{array}{ll}
w(j+1) & \text{if }j\equiv i\bmod n \\
w(j-1)=j & \text{if }j\equiv i+1\bmod n \\
w(j) & \text{otherwise}
\end{array}\right\} \]
We observe that $j\leq ws_i(j)\leq j+n$, unless $j\equiv i\bmod n$ and $w(j+1)=j+n+1$.   In that case, $w(j+1)\equiv i+1 \bmod n$.  
Since $w$ is a bijection and $n$-periodic, this implies that $j+1\equiv i\mod n$. So $j \equiv j+1 \bmod n$ and, once again, we contradict the assumption that $n \geq 2$.
This completes the verification that $s_iw$ and $ws_i$ are bounded affine permutations when $n\geq2$.

%For $j \not \equiv i$, $i+1 \bmod n$, this is immediate since $(s_i w)(j) = (w s_i)(j) = w(j)$.
%We also have $(s_i w)(j) = j$ if $j \equiv i \bmod n$ and $(w s_i)(j) = j$ if $j \equiv i+1 \bmod n$. The remaining cases are $(w s_i)(i)$ and $(s_i w)(i+1)$. In the first case, we have $(w s_i)(i) = w(i+1)$. Since $w(i) = i+1$, we must have $w(i+1) \neq i+n+1$, and thus $w(i+1) \leq i+n$, showing that $(w s_i)(i) \leq i+n$. Similarly, we just showed that $w(i+1) \leq i+n$, so $(s_i w)(i+1) \leq i+n+1$. This checks that these are all bounded affine permutations.

Let $D$ be a Postnikov diagram for $w$ which has strand from $\ell_i$ to $\ell_{i+1}$ which does not cross any other strands.  The strands in a small neighborhood of the points $\ell_{i-1},\ell_i$, and $\ell_{i+1}$ may be reconnected as in Figure \ref{fig: shortstrand} to produce Postnikov diagrams for $s_iw$ and $ws_i$ with the same mutable quiver as $D$.  Note that the ice quivers for $s_iw$ and $ws_i$ have one fewer frozen vertex than $w$.
%
%Let $D$ be a Postnikov diagram for $s_i w$, so $D$ has a self loop at $i$ and has strands coming in and out of $i+1$. Reconnecting those strands as shown in Figure \ref{fig: shortstrand} gives Postnikov diagrams for $w$ and $w s_i$, which clearly have the same mutable quiver. (The quiver for $w$ has one more frozen variable than the others.)
%The argument for $w(i+1)=i+n$ is essentially identical.
%The computation of lengths is routine.
\end{proof}

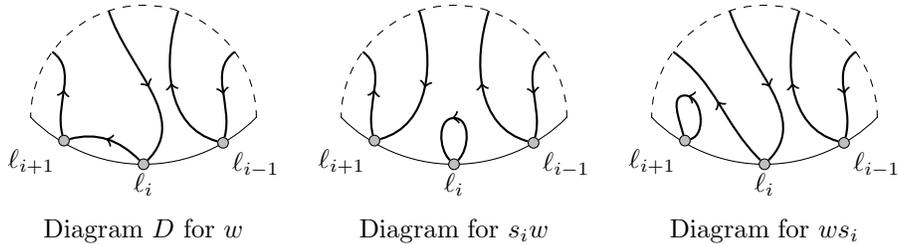
\begin{figure}[h!t]

\begin{tikzpicture}
\begin{scope}[scale=.75,baseline=(current bounding box.center)]
%	\path[use as bounding box] (-2.5,-2) rectangle (2.5,2.5);
	\draw[dashed] (2,0) arc (0:180:2);
	\begin{scope}
		\clip (0,0) circle (2);
		\draw (0,2) circle (2.82);
	\end{scope}
	\node[dot] (a) at (1.41, -.44) {};
	\node[dot] (b) at (0,-.82) {};
	\node[dot] (c) at (-1.41, -.4) {};
	\node[below right] at (a) {$\ell_{i-1}$};
	\node[below] at (b) {$\ell_i$};
	\node[below left] at (c) {$\ell_{i+1}$};
	\draw[oriented, draw=black] (c) to [out=105,in=-36] (144:2);
	\draw[oriented, draw=black] (b) to [out=135,in= 15] (c);
	\draw[oriented, draw=black] (108:2) to [out=-72,in=45] (b);
	\draw[oriented, draw=black] (a) to [out=165,in=-108] (72:2);
	\draw[oriented, draw=black] (36:2) to [out=-144,in=75] (a);
	\node at (0,-2) {Diagram $D$ for $w$};
\end{scope}
\begin{scope}[xshift=1.625in,scale=.75,baseline=(current bounding box.center)]
%	\path[use as bounding box] (-2.5,-2) rectangle (2.5,2.5);
	\draw[dashed] (2,0) arc (0:180:2);
	\begin{scope}
		\clip (0,0) circle (2);
		\draw (0,2) circle (2.82);
	\end{scope}
	\node[dot] (a) at (1.41, -.44) {};
	\node[dot] (b) at (0,-.82) {};
	\node[dot] (c) at (-1.41, -.4) {};
	\node[below right] at (a) {$\ell_{i-1}$};
	\node[below] at (b) {$\ell_i$};
	\node[below left] at (c) {$\ell_{i+1}$};
	\draw[oriented, draw=black] (c) to [out=105,in=-36] (144:2);
	\draw[oriented, draw=black] (108:2) to [out=-72,in=15] (c);
	\draw[oriented, draw=black] (b) to [out=45,in=0] (0,0) to [out=180,in=135] (b);
	\draw[oriented, draw=black] (a) to [out=165,in=-108] (72:2);
	\draw[oriented, draw=black] (36:2) to [out=-144,in=75] (a);
	
	\node at (0,-2) {Diagram for $s_iw$};
\end{scope}
\begin{scope}[xshift=3.25in,scale=.75,baseline=(current bounding box.center)]
%	\path[use as bounding box] (-2.5,-2) rectangle (2.5,2.5);
	\draw[dashed] (2,0) arc (0:180:2);
	\begin{scope}
		\clip (0,0) circle (2);
		\draw (0,2) circle (2.82);
	\end{scope}
	\node[dot] (a) at (1.41, -.44) {};
	\node[dot] (b) at (0,-.82) {};
	\node[dot] (c) at (-1.41, -.4) {};
	\node[below right] at (a) {$\ell_{i-1}$};
	\node[below] at (b) {$\ell_i$};
	\node[below left] at (c) {$\ell_{i+1}$};
	\draw[oriented, draw=black] (b) to [out=135,in=-36] (144:2);
	\draw[oriented, draw=black] (108:2) to [out=-72,in=45] (b);
	\draw[oriented, draw=black] (c) to [out=15,in=0] (-1.41,.4) to [out=180,in=105] (c);
	\draw[oriented, draw=black] (a) to [out=165,in=-108] (72:2);
	\draw[oriented, draw=black] (36:2) to [out=-144,in=75] (a);
	
	\node at (0,-2) {Diagram for $ws_i$};
\end{scope}
\end{tikzpicture} 
\caption{Diagrams with the same mutable quiver, for $w(i)=i+1$}
\label{fig: shortstrand}
\end{figure}

%We remark that, while $w$, $s_i w$ and $w s_i$ must have the same set of mutable quivers, which of those quivers are realized by Postnikov diagrams will differ.

%An almost identical argument to that for Lemma~\ref{short arc 2} establishes:
%\begin{lemma} \label{short arc 2}
%Let $w$ be a bounded affine permutation with $w(i+1)=i+n$. Then $s_i w$ and $w s_i$ are also bounded affine permutations, and $\cA(s_i w)$, $\cA(w s_i)$ and $\cA(w)$ all have the same exchange type. 
%%Furthermore, $\ell(s_i w) = \ell(w s_i) = \ell(w)+1$.
%\end{lemma}

We will apply the next lemma in two different settings, which are described by the corollaries that follow it.

\begin{lemma} \label{covers}
Let $v$ be a bounded affine permutation with $v^{-1}(i) = a$, $v^{-1}(i+1) = b$, $v(i) = c$ and $v(i+1)=d$. Assume that none of $\{ a,b,c,d \}$ are congruent to any of $i$, $i+1$ modulo $n$. Assume further that $a<b$ and $c<d$. 

With this notation, $s_i v$, $v s_i$ and $s_i v s_i$ are all bounded affine permutations.
The cluster algebras $\cA(s_i v)$ and $\cA(v s_i)$ have the same exchange type. 
Furthermore, one of the following two scenarios occurs (as illustrated in Figure \ref{fig: twocases}):
\begin{enumerate}
\item There is a mutable quiver $\Q$ for $\cA(v)$ which has a vertex $x$ with an arrow to its unique neighbor $y$, so that $\Q[x^{-1}]$ is a mutable quiver for $\cA(s_i v)$; $\Q[y^{-1}]$ is the disjoint union of a mutable quiver for $\cA(s_i v s_i)$ and an isolated point; and $\Q[x^{-1}, y^{-1}]$ is a mutable quiver for $\cA(s_i v s_i)$.
\item There is a mutable quiver $\Q$ for $\cA(v)$ which is the disjoint union of a point and a mutable quiver for $\cA(s_i v)$. 
\end{enumerate}
\end{lemma}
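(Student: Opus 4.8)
The plan is to reduce all three assertions to the inspection of a single Postnikov diagram for $v$ in a small disc around the boundary arc joining $\ell_i$ and $\ell_{i+1}$. First I would dispatch the claim that $s_iv$, $vs_i$ and $s_ivs_i$ are bounded affine permutations by the same residue-by-residue computation as in the proof of Lemma~\ref{short arc}: left multiplication by $s_i$ alters the value of the permutation only on the residue classes of $a=v^{-1}(i)$ and $b=v^{-1}(i+1)$, and right multiplication only on the classes of $i$ and $i+1$, so the hypothesis that $a,b,c,d$ avoid the classes of $i$ and $i+1$ is exactly what keeps every value inside its window $[j,j+n]$; conditions (2) and (3) are immediate since the total displacement is unchanged. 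I would also record here that, because $a<b$ with $v(a)<v(b)$ and $v(i)<v(i+1)$, passing from $v$ to $s_iv$, $vs_i$, or $s_ivs_i$ creates one, one, or two new inversions respectively, so $\ell(s_iv)=\ell(vs_i)=\ell(v)+1$ and $\ell(s_ivs_i)=\ell(v)+2$; this is what will make the dichotomy usable for the reverse induction of Theorem~\ref{MainTheorem}.

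Next, fix a Postnikov diagram $D$ for $v$. In a disc $U$ meeting the boundary only along $[\ell_i,\ell_{i+1}]$ there are exactly four strand ends: the strand leaving $\ell_i$ (toward $\ell_c$), the strand entering $\ell_i$ (from $\ell_a$), the strand leaving $\ell_{i+1}$ (toward $\ell_d$), and the strand entering $\ell_{i+1}$ (from $\ell_b$). I would reroute inside $U$ the two \emph{incoming} strands so that the one from $\ell_a$ lands at $\ell_{i+1}$ and the one from $\ell_b$ lands at $\ell_i$; this produces a Postnikov diagram for $s_iv$ (one checks conditions (1)--(4) still hold, using $a<b$ and Lemma~\ref{nesters dont cross} to see the rerouted strands sit consistently). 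Rerouting instead the two \emph{outgoing} strands produces a diagram for $vs_i$, and doing both at once a diagram for $s_ivs_i$. Since all of this happens inside $U$ and changes no crossing of $D$ outside $U$, comparing the incoming and outgoing surgeries shows that the diagrams for $s_iv$ and $vs_i$ carry the same mutable quiver (differing only in their frozen vertices), which gives the stated equality of exchange types; this is the mechanism of Figure~\ref{fig: shortstrand}.

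It remains to track the surgery on the quiver, and the point is that inside $U$ the diagram $D$ has exactly two combinatorial forms, namely the two pictures of Figure~\ref{fig: twocases} (which one occurs being governed by the cyclic order of $b$ and $c$ among $a,b,c,d$, cf.~Lemma~\ref{nesters dont cross}). Let $\Q$ be the mutable quiver of $D$. In the first form there is an interior alternating region that becomes a quiver vertex $x$ whose only \emph{mutable} neighbour is a vertex $y$, with the edge oriented $x\to y$ by the clockwise/counterclockwise rule at the relevant crossing. I would then read off from the pictures that the incoming-surgery erases exactly the region $x$, so the mutable quiver of the resulting $s_iv$-diagram is $\Q[x^{-1}]$, and that the full surgery erases exactly $x$ and $y$, so the mutable quiver of the $s_ivs_i$-diagram is $\Q[x^{-1},y^{-1}]$; the middle claim then follows formally, since $x$ has no neighbour other than $y$ in $\Q$, whence $\Q[y^{-1}]=\Q[x^{-1},y^{-1}]\sqcup\{x\}$. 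In the second form no crossing sits at the corner, the corner region survives the incoming-surgery as an isolated mutable vertex, and one obtains scenario (2), $\Q=\{\mathrm{pt}\}\sqcup(\text{mutable quiver of }s_iv)$.

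The routine parts are the permutation bookkeeping and the exchange-type equality, both of which follow the template of Lemma~\ref{short arc}. The real work — and the step I expect to be the main obstacle — is the previous paragraph: for each of the two local forms one must verify that the region $x$ is genuinely alternating (hence a vertex) with exactly one mutable neighbour $y$, that the edge points $x\to y$, and that the two surgeries delete precisely the regions claimed while merging the remaining faces correctly. Carrying this out forces one to keep careful track of which regions near the corner are clockwise, which are counterclockwise and which are alternating, and to apply the orientation conventions consistently on $D$ and on its surgered versions; this is the only place a sign error could slip in.
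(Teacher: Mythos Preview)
Your bounded-affine-permutation check and your plan for the exchange-type equality are fine and match the paper. The gap is in the direction of the main construction. You start from a Postnikov diagram $D$ for $v$ and try to surger it down to diagrams for $s_iv$, $vs_i$, $s_ivs_i$; the paper does the reverse, starting from a diagram $D$ for $s_ivs_i$ and \emph{extending} it, by inserting a fixed local pattern of crossings near $\ell_i,\ell_{i+1}$ (Figure~\ref{fig: fourdiagrams}), to obtain diagrams for $s_iv$, $vs_i$ and $v$.

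The direction matters because of condition~(4). In any diagram for $s_ivs_i$ the strands $a\to i+1$ and $b\to i$ are nested ($a<b<i<i+1$), so Lemma~\ref{nesters dont cross} guarantees they do not cross; similarly $i+1\to c$ and $i\to d$ do not cross. Thus when the paper glues in the new crossings, each new crossing involves a pair of strands with no prior crossings, and condition~(4) holds automatically for the enlarged diagrams. Going your way, the relevant strands of the $v$-diagram are $a\to i$ and $b\to i+1$; these are \emph{not} nested, they must cross an odd number of times, and nothing in the axioms forces that number to be one. If in your chosen $D$ they cross three or more times, then swapping their landing points near $\ell_i,\ell_{i+1}$ leaves a positive even number of crossings between the resulting strands $a\to i+1$ and $b\to i$, and the argument of Lemma~\ref{nesters dont cross} shows this is incompatible with condition~(4). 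So your surgered picture need not be a Postnikov diagram at all, and the assertion that ``inside $U$ the diagram $D$ has exactly two combinatorial forms'' is unjustified for an arbitrary $D$.

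The cure is exactly the paper's move: build the $v$-diagram from an $s_ivs_i$-diagram, so that the local configuration near $\ell_i,\ell_{i+1}$ is prescribed rather than discovered. Then the new alternating region $x$ and its unique mutable neighbour $y$ (and the arrow $x\to y$) are visible in the inserted pattern, and the dichotomy is simply whether $y$ is an interior region (mutable, scenario~(1)) or a boundary region (frozen, scenario~(2)) of the extended diagram. What you flagged as the hard step --- tracking orientations and adjacencies in an unknown $D$ --- disappears once the local picture is constructed rather than assumed.
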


\begin{proof}
We first check that $s_i v$, $v s_i$ and $s_i v s_i$ are bounded affine permutations. We need to check the inequalities $a \leq i+1 \leq a+n$, $b \leq i \leq b+n$, $i+1 \leq c \leq i+1+n$ and  $i \leq d \leq i+n$. Since we know $a \leq i \leq a+n$, $b \leq i+1 \leq b+n$,  $i \leq c \leq i+n$ and $i+1 \leq d \leq i+1+n$, we just need to check that $i \neq a+n$, $b \neq i+1$, $i \neq c$ and $d \neq i+1+n$, respectively. The failure of any of these inequalities would produce a collision between $\{ a, b, c, d \}$ and $\{ i, i+1 \}$ modulo $n$.

Let $D$ be a Postnikov diagram for $s_i v s_i$. Note that the strands $a \to i+1$ and $b \to i$ of $D$ cannot cross, by Lemma~\ref{nesters dont cross}, and the same holds for the strands $i+1 \to c$ and $i \to d$. 

Extend diagram $D$ to three larger diagrams as shown in Figure \ref{fig: fourdiagrams}. It is obvious that these larger diagrams obey conditions $(1)$, $(2)$ and $(3)$ in the definition of a Postnikov diagram.  By the observation of the above paragraph, the new crossings added by these diagrams involve strands which don't cross in $D$, so condition $(4)$ holds as well, and these larger diagrams are Postnikov diagrams. We can easily check that they have connectivity $s_i v$, $v s_i$ and $v$ respectively. 

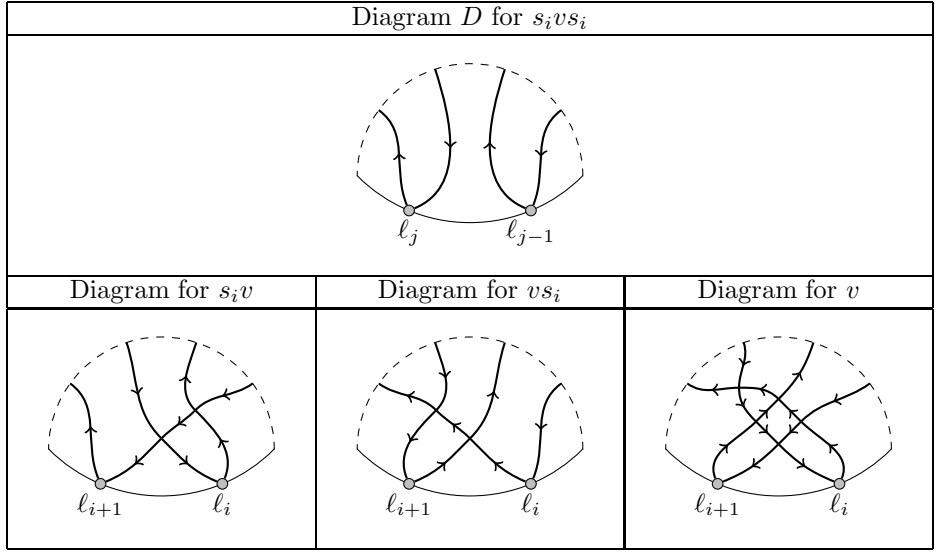
\begin{figure}[h!t]
\begin{tabular}{|c|c|c|}
\hline
\multicolumn{3}{|c|}{\text{Diagram $D$ for $s_ivs_i$} } \\
\hline
\multicolumn{3}{|c|}{
\begin{tikzpicture}[scale=.75,baseline=(current bounding box.center)]
	\path[use as bounding box] (-2.5,-1.75) rectangle (2.5,2.5);
	\draw[dashed] (2,0) arc (0:180:2);
	\begin{scope}
		\clip (0,0) circle (2);
		\draw (0,2) circle (2.82);
	\end{scope}
	\node[dot] (a) at (1.08, -.605) {};
	\node[dot] (b) at (-1.08, -.605) {};
	\node[below] at (a) {$\ell_{j-1}$};
	\node[below] at (b) {$\ell_j$};
	\draw[oriented, draw=black] (b) to [out=112.5,in=-36] (144:2);
	\draw[oriented, draw=black] (108:2) to [out=-72,in=22.5] (b);
	\draw[oriented, draw=black] (a) to [out=157.5,in=-108] (72:2);
	\draw[oriented, draw=black] (36:2) to [out=-144,in=67.5] (a);
\end{tikzpicture}
} \\
\hline
Diagram for $s_iv$ & Diagram for $vs_i$ & Diagram for $v$ \\
\hline
\begin{tikzpicture}[scale=.75,baseline=(current bounding box.center)]
	\path[use as bounding box] (-2.5,-1.75) rectangle (2.5,2.5);
	\draw[dashed] (2,0) arc (0:180:2);
	\begin{scope}
		\clip (0,0) circle (2);
		\draw (0,2) circle (2.82);
	\end{scope}
	\node[dot] (a) at (1.08, -.605) {};
	\node[dot] (b) at (-1.08, -.605) {};
	\node[below] at (a) {$\ell_{i}$};
	\node[below] at (b) {$\ell_{i+1}$};
	\node[invisible] (z) at (.6,.7) {};
	\node[invisible] (t) at (0,.2) {};

	\draw[oriented] (b) to [out=112.5,in=-36] (144:2);

	\draw[oriented] (108:2) to [out=-72,in=135] (t);
	\draw[oriented] (t) to [out=-45,in=157.5] (a);
	
	\draw[oriented] (a) to [out=67.5,in=-45] (z);
	\draw[oriented] (z) to [out=135,in=-108] (72:2);

	\draw[oriented] (36:2) to [out=-144,in=45] (z);
	\draw[oriented] (z) to [out=225,in=45] (t);
	\draw[oriented] (t) to [out=225,in=22.5] (b);
\end{tikzpicture}
&
\begin{tikzpicture}[scale=.75,baseline=(current bounding box.center)]
	\path[use as bounding box] (-2.5,-1.75) rectangle (2.5,2.5);
	\draw[dashed] (2,0) arc (0:180:2);
	\begin{scope}
		\clip (0,0) circle (2);
		\draw (0,2) circle (2.82);
	\end{scope}
	\node[dot] (a) at (1.08, -.605) {};
	\node[dot] (b) at (-1.08, -.605) {};
	\node[below] at (a) {$\ell_{i}$};
	\node[below] at (b) {$\ell_{i+1}$};
	\node[invisible] (x) at (0,.2) {};
	\node[invisible] (u) at (-.6,.7) {};

	\draw[oriented] (b) to [out=22.5,in=225] (x);
	\draw[oriented] (x) to [out=45,in=-108] (72:2);
	
	\draw[oriented] (108:2) to [out=-72,in=45] (u);
	\draw[oriented] (u) to [out=225,in=112.5] (b);
	
	\draw[oriented] (a) to [out=157.5,in=-45] (x);
	\draw[oriented] (x) to [out=135,in=-45] (u);
	\draw[oriented] (u) to [out=135,in=-36] (144:2);
	
	\draw[oriented] (36:2) to [out=-144,in=67.5] (a);
\end{tikzpicture}
&
\begin{tikzpicture}[scale=.75,baseline=(current bounding box.center)]
	\path[use as bounding box] (-2.5,-1.75) rectangle (2.5,2.5);
	\draw[dashed] (2,0) arc (0:180:2);
	\begin{scope}
		\clip (0,0) circle (2);
		\draw (0,2) circle (2.82);
	\end{scope}
	\node[dot] (a) at (1.08, -.605) {};
	\node[dot] (b) at (-1.08, -.605) {};
	\node[below] at (a) {$\ell_{i}$};
	\node[below] at (b) {$\ell_{i+1}$};
	\node[invisible] (x) at (0,.9) {};
	\node[invisible] (y) at (-.4,.5) {};
	\node[invisible] (z) at (.4,.5) {};
	\node[invisible] (t) at (0,.1) {};
	\node[invisible] (u) at (-.7,1.1) {};

	\draw[oriented] (b) to [out=112.5,in=225] (y);
	\draw[oriented] (y) to [out=45,in=225] (x);
	\draw[oriented] (x) to [out=45,in=-108] (72:2);
	
	\draw[oriented] (108:2) to [out=-72,in=90] (u);
	\draw[oriented] (u) to [out=270,in=135] (y);
	\draw[oriented] (y) to [out=-45,in=135] (t);
	\draw[oriented] (t) to [out=-45,in=157.5] (a);
	
	\draw[oriented] (a) to [out=67.5,in=-45] (z);
	\draw[oriented] (z) to [out=135,in=-45] (x);
	\draw[oriented] (x) to [out=135,in=0] (u);
	\draw[oriented] (u) to [out=180,in=-36] (144:2);
	
	\draw[oriented] (36:2) to [out=-144,in=45] (z);
	\draw[oriented] (z) to [out=225,in=45] (t);
	\draw[oriented] (t) to [out=225,in=22.5] (b);
\end{tikzpicture}
\\
\hline
\end{tabular}
\caption{Building Postnikov diagrams for $s_iv$, $vs_i$ and $s_ivs_i$}
\label{fig: fourdiagrams}
\end{figure}

Let $\Q$ be the quiver corresponding to this Postnikov diagram for $v$, and let $x$ and $y$ be the vertices marked in Figure \ref{fig: fourdiagrams}.We see that $x$ is a mutable vertex of $\Q$, but $y$ may either be a mutable vertex or a frozen vertex, and that all the other neighbors of $x$ are frozen. (Figure \ref{fig: twocases} shows how either case may occur.)
Moreover, the mutable part of $\Q[x^{-1}]$ is isomorphic to both the mutable part of the quiver we constructed for $s_i v$ and for $v s_i$, so we have verified that $\cA(s_i v)$ and $\cA(v s_i)$ have the same exchange type.

\begin{figure}[h!b]
\begin{tikzpicture}
\begin{scope}[xshift=2.3in,yshift=.7cm,scale=.75]
%	\path[use as bounding box] (-2.5,-1.8) rectangle (2.5,2);
	\draw (-45:2) arc (-45:-135:2);
	\draw (45:2) arc (45:135:2);
%	\draw (115:2) to (65:2);
	\draw[dashed] (45:2) to (-45:2);
	\draw[dashed] (135:2) to (-135:2);

	\node[dot] (a) at (-72:2) {};
	\node[dot] (b) at (-108:2) {};
	\node[below] at (a) {$\ell_{i}$};
	\node[below] at (b) {$\ell_{i+1}$};
	\node[invisible] (x) at (0,-.5) {};
	\node[invisible] (y) at (-.4,-.9) {};
	\node[invisible] (z) at (.4,-.9) {};
	\node[invisible] (t) at (0,-1.3) {};
	\node[invisible] (u) at (-.7,-.3) {};

	\draw[faded oriented] (b) to [out=112.5,in=225] (y);
	\draw[oriented,draw=black!25] (y) to [out=45,in=225] (x);
	\draw[oriented,draw=black!25] (x) to [out=45,in=180] (1.41,.5);
	
	\draw[oriented,draw=black!25] (-1.41,.5) to [out=0,in=90] (u);
	\draw[draw=black!25] (u) to [out=270,in=135] (y);
	\draw[oriented,draw=black!25] (y) to [out=-45,in=135] (t);
	\draw[oriented,draw=black!25] (t) to [out=-45,in=157.5] (a);
	
	\draw[oriented,draw=black!25] (a) to [out=67.5,in=-45] (z);
	\draw[oriented,draw=black!25] (z) to [out=135,in=-45] (x);
	\draw[oriented,draw=black!25] (x) to [out=135,in=0] (u);
	\draw[oriented,draw=black!25] (u) to [out=180,in=0] (-1.41,-.5);
	
	\draw[oriented,draw=black!25] (1.41,-.5) to [out=180,in=45] (z);
	\draw[oriented,draw=black!25] (z) to [out=225,in=45] (t);
	\draw[oriented,draw=black!25] (t) to [out=225,in=22.5] (b);
	
	\node[blue] (2) at (-.1,.75) {$y$};
	\node[blue dot] (1) at (0,-.9) {};
	\node[blue,left] at (1) {$x$};

	\node at (0,-3.25) {Region $y$ on boundary};
\end{scope}
\begin{scope}[scale=1]
	\path[use as bounding box] (-2.5,-1.8) rectangle (2.5,2);
	\draw[dashed] (2,0) arc (0:180:2);
	\begin{scope}
		\clip (0,0) circle (2);
		\draw (0,2) circle (2.82);
	\end{scope}
	\node[dot] (a) at (1.08, -.605) {};
	\node[dot] (b) at (-1.08, -.605) {};
	\node[below] at (a) {$\ell_{i}$};
	\node[below] at (b) {$\ell_{i+1}$};
	\node[invisible] (x) at (0,.9) {};
	\node[invisible] (y) at (-.4,.5) {};
	\node[invisible] (z) at (.4,.5) {};
	\node[invisible] (t) at (0,.1) {};
	\node[invisible] (u) at (-.7,1.1) {};

	\draw[faded oriented] (b) to [out=112.5,in=225] (y);
	\draw[oriented,draw=black!25] (y) to [out=45,in=225] (x);
	\draw[oriented,draw=black!25] (x) to [out=45,in=-120] (60:2);
	
	\draw[oriented,draw=black!25] (120:2) to [out=-60,in=90] (u);
	\draw[oriented,draw=black!25] (u) to [out=270,in=135] (y);
	\draw[oriented,draw=black!25] (y) to [out=-45,in=135] (t);
	\draw[oriented,draw=black!25] (t) to [out=-45,in=157.5] (a);
	
	\draw[oriented,draw=black!25] (a) to [out=67.5,in=-45] (z);
	\draw[oriented,draw=black!25] (z) to [out=135,in=-45] (x);
	\draw[oriented,draw=black!25] (x) to [out=135,in=0] (u);
	\draw[oriented,draw=black!25] (u) to [out=180,in=-30] (150:2);
	
	\draw[oriented,draw=black!25] (30:2) to [out=-150,in=45] (z);
	\draw[oriented,draw=black!25] (z) to [out=225,in=45] (t);
	\draw[oriented,draw=black!25] (t) to [out=225,in=22.5] (b);
		
	\node[blue dot] (2) at (-.1,1.45) {};
	\node[blue dot] (1) at (0,.5) {};
	\node[blue,left] at (2) {$y$};
	\node[blue,left] at (1) {$x$};
	\draw[-angle 90, thick, blue] (1) to node[right,blue] {$\alpha$} (2);
	\begin{scope}
	\clip (0,0) circle (2);
	\draw[thick, blue!50] (2) to (-1,2.5);
	\draw[thick, blue!50,angle 90-] (2) to (0,2.5);
	\draw[thick, blue!50] (2) to (1,2.5);
	\end{scope}

	\node at (0,-1.75) {Region $y$ in interior};
\end{scope}
\end{tikzpicture}
\caption{Two scenarios for $s_ivs_i$ (quiver drawn in blue)}
\label{fig: twocases}
\end{figure}
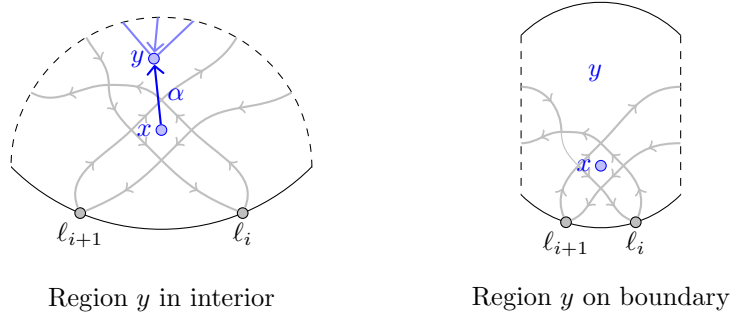

If $y$ is a mutable vertex, then we are in the first case above. If $y$ is a frozen vertex, we are in the second case. In either case, the Lemma is proved.
\end{proof}

\begin{corollary} \label{sideways}
With notation and hypotheses as in Lemma~\ref{covers}, $\cA(s_i v)$ is Louise if and only is $\cA(v s_i)$ is.
\end{corollary}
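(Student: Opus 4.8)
The plan is to read this corollary off directly from Lemma~\ref{covers} together with the general principle, recorded in Section~\ref{ClusterBackground}, that the Louise property depends only on the exchange type of a cluster algebra. Concretely, Lemma~\ref{covers} asserts (among other things) that $\cA(s_i v)$ and $\cA(v s_i)$ have the same exchange type, meaning that their sets of mutable quivers agree as a single mutation-equivalence class of quivers. So the entire content of the corollary is the implication ``same exchange type $\Rightarrow$ both or neither Louise.''

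To justify that implication I would argue as follows. By Definition~\ref{defn: Louise}, a cluster algebra is Louise exactly when every quiver in its exchange type is a Louise quiver; and the class of Louise quivers is closed under mutation (condition (2) of that definition), so a mutation-equivalence class of quivers either consists entirely of Louise quivers or contains no Louise quiver at all. Hence ``being Louise'' is a well-defined attribute of an exchange type, and two cluster algebras sharing an exchange type are either both Louise or both not. Applying this to the exchange type common to $\cA(s_i v)$ and $\cA(v s_i)$ gives the desired equivalence.

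I do not expect any genuine obstacle here: the only subtlety is to make sure we are invoking the correct clause of Lemma~\ref{covers}, namely the assertion ``$\cA(s_i v)$ and $\cA(v s_i)$ have the same exchange type,'' which in that lemma's proof came from exhibiting $\Q[x^{-1}]$ as a mutable quiver for \emph{both} cluster algebras via the Postnikov diagrams in Figure~\ref{fig: fourdiagrams}. Given that, the proof of Corollary~\ref{sideways} is a single sentence.
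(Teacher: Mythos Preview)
Your proposal is correct and matches the paper's own proof essentially verbatim: the paper simply notes that $\cA(s_i v)$ and $\cA(v s_i)$ have the same exchange type by Lemma~\ref{covers}, and that the Louise property depends only on exchange type. Your extra unpacking of why Louise is an exchange-type invariant (via closure under mutation in Definition~\ref{defn: Louise}) is a helpful elaboration but not a different argument.
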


\begin{proof}
$\cA(s_i v)$ and $\cA(v s_i)$ have the same exchange type; the Louise property depends only on exchange type.
\end{proof}

\begin{corollary} \label{downwards}
With notation and hypotheses as in Lemma~\ref{covers}, if $\cA(s_i v)$ and $\cA(s_i v s_i)$ are Louise, so is $\cA(v)$.
\end{corollary}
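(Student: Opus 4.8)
The plan is to match the two alternatives furnished by Lemma~\ref{covers} against the defining clauses of a Louise quiver in Definition~\ref{defn: Louise}. Before doing so, I would isolate one small closure property: \emph{if a quiver $\Q_0$ is Louise, then so is the quiver obtained from $\Q_0$ by adding an isolated vertex $\bullet$}. This is proved by induction on a derivation witnessing that $\Q_0$ lies in the Louise class. If $\Q_0$ is edgeless, then $\Q_0 \sqcup \{\bullet\}$ is edgeless. If $\Q_0 = \mu_v(\Q_1)$ for a Louise quiver $\Q_1$, then $\Q_0 \sqcup \{\bullet\} = \mu_v(\Q_1 \sqcup \{\bullet\})$, which is Louise by induction and closure under mutation. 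If $\Q_0$ arose via clause $(3)$ from an arrow $\alpha \colon s \to t$ lying on no bi-infinite directed path, with $\Q_0[t^{-1}]$, $\Q_0[s^{-1}]$, $\Q_0[s^{-1},t^{-1}]$ all Louise, then $\alpha$ still lies on no bi-infinite directed path of $\Q_0 \sqcup \{\bullet\}$, while $(\Q_0\sqcup\{\bullet\})[t^{-1}]$, $(\Q_0\sqcup\{\bullet\})[s^{-1}]$ and $(\Q_0\sqcup\{\bullet\})[s^{-1},t^{-1}]$ are the corresponding deletions from $\Q_0$ with an isolated vertex adjoined, hence are Louise by induction; so clause $(3)$ certifies $\Q_0 \sqcup \{\bullet\}$.

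Next I would invoke Lemma~\ref{covers} and treat its two cases. In case $(2)$ there is a mutable quiver $\Q$ for $\cA(v)$ equal to the disjoint union of an isolated point and a mutable quiver for $\cA(s_i v)$; the latter is Louise since $\cA(s_i v)$ is, so $\Q$ is Louise by the closure property above, and therefore $\cA(v)$ is Louise (the Louise property of a cluster algebra depends only on its exchange type, which is closed under mutation). In case $(1)$ there is a mutable quiver $\Q$ for $\cA(v)$ containing a vertex $x$ whose unique neighbor is $y$, joined to $y$ by an arrow $x \to y$. Since this is the only arrow at $x$, the vertex $x$ is a source, so $x \to y$ lies on no bi-infinite directed path. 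I then apply clause $(3)$ of Definition~\ref{defn: Louise} with $(s,t) = (x,y)$: the quiver $\Q[x^{-1}]$ is a mutable quiver for $\cA(s_i v)$ (Louise by hypothesis), the quiver $\Q[x^{-1},y^{-1}]$ is a mutable quiver for $\cA(s_i v s_i)$ (Louise by hypothesis), and $\Q[y^{-1}]$ is a mutable quiver for $\cA(s_i v s_i)$ together with an isolated vertex (Louise by hypothesis and the closure property). Hence $\Q$ is Louise, and so $\cA(v)$ is Louise.

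The proof is essentially bookkeeping: all of the genuine combinatorics has already been done inside Lemma~\ref{covers}. The only points that require any care --- and hence the ``main obstacle'', such as it is --- are the closure-under-adding-an-isolated-vertex lemma, which must be extracted separately because clause $(3)$ of the definition cannot on its own delete an isolated vertex, and the elementary remark that a source vertex with a single out-neighbor cannot lie on a bi-infinite directed path, which is exactly what authorizes the use of clause $(3)$ in case $(1)$.
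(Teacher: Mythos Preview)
Your proof is correct and follows essentially the same approach as the paper's. The paper's own proof is terser: it simply asserts that in case~(1) ``localizing at $x$, at $y$ or at both, produces Louise quivers'' and in case~(2) the quiver is ``the disjoint union of a Louise quiver and a single vertex'', leaving implicit exactly the closure-under-adding-an-isolated-vertex fact that you carefully isolated and proved.
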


\begin{proof}
Suppose that case (1) of Lemma~\ref{covers} applies. Then $\cA(v)$ can be represented by a quiver with two vertices $x$ and $y$, so that $x$ is a source or sink joined to $y$, and localizing at $x$, at $y$ or at both, produces Louise quivers. On the other hand, if case (2) applies, then the quiver of $\cA(v)$ is the disjoint union of a Louise quiver and a single vertex.
\end{proof}

The power of Corollary~\ref{sideways} comes from the fact that, although cluster algebras of the same exchange type have the same set of mutable quivers, there can be mutable quivers coming from Postnikov diagrams for $\cA(s_i v)$ where the corresponding mutable quiver for $\cA(v s_i)$ does not come from a Postnikov diagram. 
Thus, Corollary~\ref{sideways} lets us use Postnikov's technology to describe more mutable quivers.

\begin{example}
We give an example of the point discussed in the previous paragraph.
Consider the bounded affine permutation $w$ of type $(3,9)$ defined below.
\[w=\left\{\begin{array}{ccccccccc}
1 & 2 & 3 & 4 & 5 & 6 & 7 & 8 & 9 \\
\downarrow & \downarrow & \downarrow & \downarrow & \downarrow & \downarrow & \downarrow & \downarrow & \downarrow \\
3 & 8 & 7 & 6 & 2 & 10 & 9 & 14 & 13 \end{array}\right\}
\]
A Postnikov diagram for this permutation is given in Figure~\ref{fig: rigid}.  

\begin{figure}[h!t]
\begin{tikzpicture}
\begin{scope}[scale=.65]
		\draw (0,0) circle (4);
		\node[invisible] (1) at (130:4) {};
		\node[invisible] (2) at (90:4) {};
		\node[invisible] (3) at (50:4) {};
		\node[invisible] (4) at (10:4) {};
		\node[invisible] (5) at (330:4) {};
		\node[invisible] (6) at (290:4) {};
		\node[invisible] (7) at (250:4) {};
		\node[invisible] (8) at (210:4) {};
		\node[invisible] (9) at (170:4) {};
				
		\node[above left] at (1) {$\ell_1$};
		\node[above] at (2) {$\ell_2$};
		\node[above right] at (3) {$\ell_3$};
		\node[right] at (4) {$\ell_4$};
		\node[right] at (5) {$\ell_5$};
		\node[below right] at (6) {$\ell_6$};
		\node[below left] at (7) {$\ell_7$};
		\node[left] at (8) {$\ell_8$};
		\node[left] at (9) {$\ell_9$};

		\node[invisible] (1v) at (120:3) {};
		\node[invisible] (2v) at (90:3) {};
		\node[invisible] (3v) at (60:3) {};
		\node[invisible] (4v) at (0:3) {};
		\node[invisible] (5v) at (330:3) {};
		\node[invisible] (6v) at (300:3) {};
		\node[invisible] (7v) at (240:3) {};
		\node[invisible] (8v) at (210:3) {};
		\node[invisible] (9v) at (180:3) {};
		
		\node[invisible] (a) at (30:1.6) {};
		\node[invisible] (b) at (270:1.6) {};
		\node[invisible] (c) at (150:1.6) {};
		\node[invisible] (d) at (0,0) {};

		\PostnikovBlackBoundaryThree{1v}{1}{2v}{c};
		\PostnikovWhiteBoundaryThree{2v}{2}{3v}{1v};
		\PostnikovBlackBoundaryThree{3v}{3}{a}{2v};
		\PostnikovBlackBoundaryThree{4v}{4}{5v}{a};
		\PostnikovWhiteBoundaryThree{5v}{5}{6v}{4v};
		\PostnikovBlackBoundaryThree{6v}{6}{b}{5v};
		\PostnikovBlackBoundaryThree{7v}{7}{8v}{b};
		\PostnikovWhiteBoundaryThree{8v}{8}{9v}{7v};
		\PostnikovBlackBoundaryThree{9v}{9}{c}{8v};
		\PostnikovWhite{a}{3v}{4v}{d};
		\PostnikovWhite{b}{6v}{7v}{d};
		\PostnikovWhite{c}{9v}{1v}{d};
		\PostnikovBlack{d}{a}{b}{c};

\end{scope}
\begin{scope}[xshift=2.5in,scale=.65]
		\draw (0,0) circle (4);
		\node[invisible] (1) at (130:4) {};
		\node[invisible] (2) at (90:4) {};
		\node[invisible] (3) at (50:4) {};
		\node[invisible] (4) at (10:4) {};
		\node[invisible] (5) at (330:4) {};
		\node[invisible] (6) at (290:4) {};
		\node[invisible] (7) at (250:4) {};
		\node[invisible] (8) at (210:4) {};
		\node[invisible] (9) at (170:4) {};
				
		\node[above left] at (1) {$\ell_1$};
		\node[above] at (2) {$\ell_2$};
		\node[above right] at (3) {$\ell_3$};
		\node[right] at (4) {$\ell_4$};
		\node[right] at (5) {$\ell_5$};
		\node[below right] at (6) {$\ell_6$};
		\node[below left] at (7) {$\ell_7$};
		\node[left] at (8) {$\ell_8$};
		\node[left] at (9) {$\ell_9$};

		\node[invisible] (1v) at (120:3) {};
		\node[invisible] (2v) at (90:3) {};
		\node[invisible] (3v) at (60:3) {};
		\node[invisible] (4v) at (0:3) {};
		\node[invisible] (5v) at (330:3) {};
		\node[invisible] (6v) at (300:3) {};
		\node[invisible] (7v) at (240:3) {};
		\node[invisible] (8v) at (210:3) {};
		\node[invisible] (9v) at (180:3) {};
		
		\node[invisible] (a) at (30:1.6) {};
		\node[invisible] (b) at (270:1.6) {};
		\node[invisible] (c) at (150:1.6) {};
		\node[invisible] (d) at (0,0) {};

		\PostnikovBlackBoundaryThree[faded oriented]{1v}{1}{2v}{c};
		\PostnikovWhiteBoundaryThree[faded oriented]{2v}{2}{3v}{1v};
		\PostnikovBlackBoundaryThree[faded oriented]{3v}{3}{a}{2v};
		\PostnikovBlackBoundaryThree[faded oriented]{4v}{4}{5v}{a};
		\PostnikovWhiteBoundaryThree[faded oriented]{5v}{5}{6v}{4v};
		\PostnikovBlackBoundaryThree[faded oriented]{6v}{6}{b}{5v};
		\PostnikovBlackBoundaryThree[faded oriented]{7v}{7}{8v}{b};
		\PostnikovWhiteBoundaryThree[faded oriented]{8v}{8}{9v}{7v};
		\PostnikovBlackBoundaryThree[faded oriented]{9v}{9}{c}{8v};
		\PostnikovWhite[faded oriented]{a}{3v}{4v}{d};
		\PostnikovWhite[faded oriented]{b}{6v}{7v}{d};
		\PostnikovWhite[faded oriented]{c}{9v}{1v}{d};
		\PostnikovBlack[faded oriented]{d}{a}{b}{c};
		
		\node[mutable,blue, fill=blue!10] (M1) at (90:1.6) {};
		\node[mutable,blue, fill=blue!10] (M2) at (330:1.6) {};
		\node[mutable,blue, fill=blue!10] (M3) at (210:1.6) {};	

		\draw[blue,-angle 90] (M1) to (M2);
		\draw[blue,-angle 90] (M2) to (M3);
		\draw[blue,-angle 90] (M3) to (M1);
		
\end{scope}
\end{tikzpicture}
\caption{A Postnikov diagram of type $(3,9)$ and its mutable quiver}
\label{fig: rigid}
\end{figure}
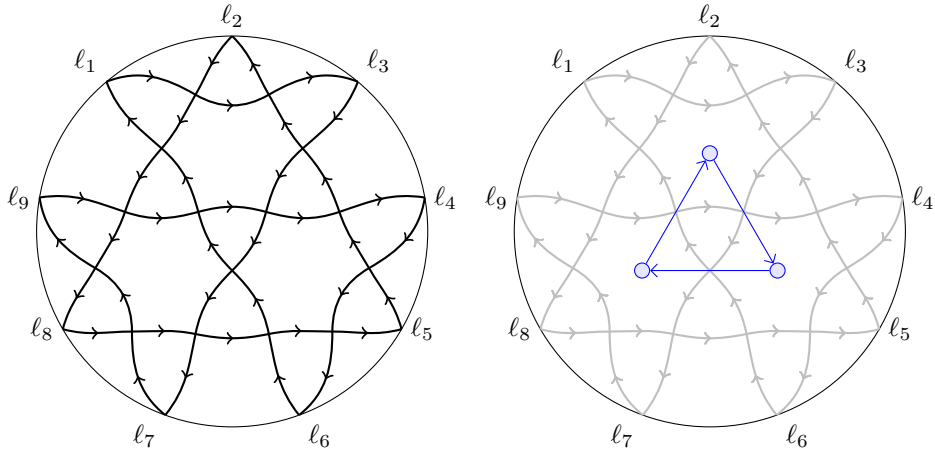

%While there are other Postnikov diagrams for this permutation, they are all equivalent in the sense that the set of face labels are the same.  

This is the only Postinkov diagram for this permutation, up to trivial modifications which don't change the underlying seed.
In particular, because the alternating regions associated with mutable variables are all hexagons, there are no square moves available.
Consequently, the associated cluster algebra only has one seed which may be described by a Postnikov diagram, and the mutable quiver of this seed has no sources or sinks.   Nevertheless, this cluster algebra is acyclic\footnote{In fact, it is a finite-type cluster algebra with $9$ cluster variables and $14$ seeds, $12$ of which are acyclic seeds.}.  

This is an example in which there are not `enough' seeds coming from Postnikov diagrams to prove local acyclicity.  Hence, we must use Lemma \ref{covers} to alter the permutation $w$, so as to change the set of seeds described by Postnikov diagrams without changing the exchange type.  In particular, $v=ws_8$ satisfies the hypothesis of Lemma \ref{covers} for $i=8$, and so Corollary~\ref{sideways} tells us we may consider the bounded affine permutation $s_8v = s_8ws_8$ instead.  Figure~\ref{fig: rigid2} shows the Postnikov diagram whose mutable quiver matches that of Figure~\ref{fig: rigid}.  Note that one of the alternating regions corresponding to a mutable variable is now a quadrilateral, so we may perform a square move at that region. Doing so obtains a Postnikov diagram for an acyclic seed of $s_8 w s_8$, and hence shows that there are acyclic seeds for $w$.

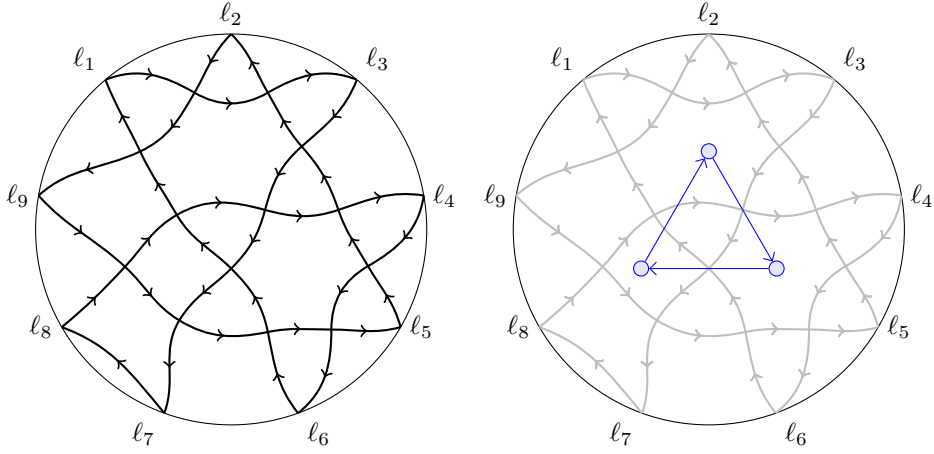
\begin{figure}[h!t]
\begin{tikzpicture}
\begin{scope}[scale=.65]
		\draw (0,0) circle (4);
		\node[invisible] (1) at (130:4) {};
		\node[invisible] (2) at (90:4) {};
		\node[invisible] (3) at (50:4) {};
		\node[invisible] (4) at (10:4) {};
		\node[invisible] (5) at (330:4) {};
		\node[invisible] (6) at (290:4) {};
		\node[invisible] (7) at (250:4) {};
		\node[invisible] (8) at (210:4) {};
		\node[invisible] (9) at (170:4) {};
				
		\node[above left] at (1) {$\ell_1$};
		\node[above] at (2) {$\ell_2$};
		\node[above right] at (3) {$\ell_3$};
		\node[right] at (4) {$\ell_4$};
		\node[right] at (5) {$\ell_5$};
		\node[below right] at (6) {$\ell_6$};
		\node[below left] at (7) {$\ell_7$};
		\node[left] at (8) {$\ell_8$};
		\node[left] at (9) {$\ell_9$};

		\node[invisible] (1v) at (120:3) {};
		\node[invisible] (2v) at (90:3) {};
		\node[invisible] (3v) at (60:3) {};
		\node[invisible] (4v) at (0:3) {};
		\node[invisible] (5v) at (330:3) {};
		\node[invisible] (6v) at (300:3) {};
		
		\node[invisible] (a) at (30:1.6) {};
		\node[invisible] (b) at (270:1.6) {};
		\node[invisible] (d) at (0,0) {};

		\node[invisible] (78) at (225:3) {};
		\node[invisible] (9c) at (165:2.3) {};
		
		\PostnikovBlackBoundaryThree{1v}{1}{2v}{9c};
		\PostnikovWhiteBoundaryThree{2v}{2}{3v}{1v};
		\PostnikovBlackBoundaryThree{3v}{3}{a}{2v};
		\PostnikovBlackBoundaryThree{4v}{4}{5v}{a};
		\PostnikovWhiteBoundaryThree{5v}{5}{6v}{4v};
		\PostnikovBlackBoundaryThree{6v}{6}{b}{5v};
		
		\PostnikovWhiteBIII{9c}{9}{1v}{d}{78};
		\PostnikovBlackBBII{78}{7}{8}{9c}{b};
		
		\PostnikovWhite{a}{3v}{4v}{d};
		\PostnikovWhite{b}{6v}{78}{d};
		\PostnikovBlack{d}{a}{b}{9c};

\end{scope}
\begin{scope}[xshift=2.5in,scale=.65]
		\draw (0,0) circle (4);
		\node[invisible] (1) at (130:4) {};
		\node[invisible] (2) at (90:4) {};
		\node[invisible] (3) at (50:4) {};
		\node[invisible] (4) at (10:4) {};
		\node[invisible] (5) at (330:4) {};
		\node[invisible] (6) at (290:4) {};
		\node[invisible] (7) at (250:4) {};
		\node[invisible] (8) at (210:4) {};
		\node[invisible] (9) at (170:4) {};
				
		\node[above left] at (1) {$\ell_1$};
		\node[above] at (2) {$\ell_2$};
		\node[above right] at (3) {$\ell_3$};
		\node[right] at (4) {$\ell_4$};
		\node[right] at (5) {$\ell_5$};
		\node[below right] at (6) {$\ell_6$};
		\node[below left] at (7) {$\ell_7$};
		\node[left] at (8) {$\ell_8$};
		\node[left] at (9) {$\ell_9$};

		\node[invisible] (1v) at (120:3) {};
		\node[invisible] (2v) at (90:3) {};
		\node[invisible] (3v) at (60:3) {};
		\node[invisible] (4v) at (0:3) {};
		\node[invisible] (5v) at (330:3) {};
		\node[invisible] (6v) at (300:3) {};
		
		\node[invisible] (a) at (30:1.6) {};
		\node[invisible] (b) at (270:1.6) {};
		\node[invisible] (d) at (0,0) {};

		\node[invisible] (78) at (225:3) {};
		\node[invisible] (9c) at (165:2.3) {};
		
		\PostnikovBlackBoundaryThree[faded oriented]{1v}{1}{2v}{9c};
		\PostnikovWhiteBoundaryThree[faded oriented]{2v}{2}{3v}{1v};
		\PostnikovBlackBoundaryThree[faded oriented]{3v}{3}{a}{2v};
		\PostnikovBlackBoundaryThree[faded oriented]{4v}{4}{5v}{a};
		\PostnikovWhiteBoundaryThree[faded oriented]{5v}{5}{6v}{4v};
		\PostnikovBlackBoundaryThree[faded oriented]{6v}{6}{b}{5v};
		
		\PostnikovWhiteBIII[faded oriented]{9c}{9}{1v}{d}{78};
		\PostnikovBlackBBII[faded oriented]{78}{7}{8}{9c}{b};
		
		\PostnikovWhite[faded oriented]{a}{3v}{4v}{d};
		\PostnikovWhite[faded oriented]{b}{6v}{78}{d};
		\PostnikovBlack[faded oriented]{d}{a}{b}{9c};
		
		\node[mutable,blue, fill=blue!10] (M1) at (90:1.6) {};
		\node[mutable,blue, fill=blue!10] (M2) at (330:1.6) {};
		\node[mutable,blue, fill=blue!10] (M3) at (210:1.6) {};	

		\draw[blue,-angle 90] (M1) to (M2);
		\draw[blue,-angle 90] (M2) to (M3);
		\draw[blue,-angle 90] (M3) to (M1);
		
\end{scope}
\end{tikzpicture}
\caption{A Postnikov diagram with a different permutation but the same mutable quiver as Figure \ref{fig: rigid}}
\label{fig: rigid2}
\end{figure}
\end{example}

We introduce one more combinatorial tool: For a bounded affine permutation $w$, we define the \newword{smallest throw of $w$} to be the minimum value of $w(i) - i$.
We are now ready to prove Theorem~\ref{MainTheorem}.

\begin{proof}[Proof of Theorem~\ref{MainTheorem}] For any bounded affine permutation $w$, the cluster algebra $\cA(w)$ is Louise.
Let $w$ be a bounded affine permutation of type $(k,n)$, length $\ell$ and shortest throw $t$.
We assume inductively that Theorem~\ref{MainTheorem} has already been proved:
\begin{itemize}
\item for all bounded affine permutations on fewer than $n$ points
\item for all bounded affine permutations on $n$ points of length $> \ell$
\item and for all bounded affine permutations on $n$ points of length $\ell$ with shortest throw $<t$. 
\end{itemize}
We can take the base cases to be the (unique) bounded affine permutations of types $(0,1)$ and $(1,1)$, which correspond to quivers with no mutable vertices.

Let $i$ be one of the indices for which $w(i) - i$ equals $t$. We break into cases:

\textbf{Case 1:} $t=0$ or $n$. In this case, by Lemma~\ref{fixed point}, there is a bounded affine permutation $w'$ on $n-1$ points so that $\cA(w)$ and $\cA(w')$ have the same exchange type; by induction, $\cA(w')$ is Louise.

\textbf{Case 2:} $t=1$ or $n-1$. In this case, by Lemma~\ref{short arc}, there is a bounded affine permutation $w'$ on $n$ points with length $\ell+1$ so that so that $\cA(w)$ and $\cA(w')$ have the same exchange type; by induction, $\cA(w')$ is Louise.

\textbf{Case 3:} $2 \leq t \leq n-2$. Since $t$ is the shortest throw, we have $w(i+1) \geq i+1 + t > i+t = w(i)$. We now break into two cases:

\textbf{Case 3a:} $w^{-1}(i+1) > w^{-1}(i)$. By Corollary~\ref{downwards} with $w=v$, if $\cA(s_i w)$ and $\cA(s_i w s_i)$ are Louise, then so is $\cA(w)$.
We have $\ell(s_i w) = \ell+1$ and $\ell(s_i w s_i) = \ell+2$, so we inductively know that $\cA(s_i w)$ and $\cA(s_i w s_i)$ are Louise.

\textbf{Case 3b:} $w^{-1}(i+1) < w^{-1}(i)$. We use Corollary~\ref{sideways} with $v=w s_i$. This says that $\cA(w)$ and $\cA(s_i w s_i)$ have the same exchange type.
We have $\ell(s_i w s_i)=\ell(w)$ in this case, and $(s_i w s_i)(i+1) = i+t$, so the shortest throw of $s_iws_i$ is less than $t$. So, again, we inductively know that $\cA(s_i w s_i)$ is Louise.
\end{proof}

\begin{remark}\label{rem: nondeg}
We have in fact proved a slightly stronger condition: $\cA(w)$ is Louise in such a manner that, whenever we make use of a cover $\Spec(\cA) = \Spec(\cA[x_s^{-1}]) \cup \Spec(\cA[x_t^{-1}])$, the vertex $s$ is a source in the mutable quiver.  

This has some additional consequences.  First, any quantum cluster algebra $\cA_q$ (in the sense of \cite{BZ05}) whose exchange type is the same as the exchange type of a Postnikov diagram will equal its own quantum upper cluster algebra, by \cite[Lemma 8.13]{MulSk}.  Second, the mutable quivers of Postnikov diagrams admit a unique non-degenerate potential up to weak equivalence.  This is a consequence of the observation that adding a source to a quiver does not change whether it has a unique non-degenerate potential up to weak equivalence.
\end{remark}

%\bibliography{UniversalBib}
\bibliography{MyNewBib}
\bibliographystyle{amsalpha}

\end{document}